\documentclass[a4paper]{amsart}

\usepackage{a4wide}
\usepackage{amssymb,amsmath}
\usepackage{bbm}
\usepackage{txfonts}
\usepackage{mathrsfs}
\usepackage{enumerate}
\usepackage{nccmath}
\usepackage{graphicx}
\usepackage{mathptmx}

\usepackage{bm,bbold}
\usepackage{multirow}
\usepackage{subfig, hyphenat}
\usepackage{natbib}

\providecommand{\url}[1]{#1}
\numberwithin{equation}{section}

\usepackage[usenames,dvipsnames]{xcolor}
\usepackage[colorlinks,linkcolor=MidnightBlue,citecolor=OliveGreen]{hyperref}

\usepackage[capitalise]{cleveref}

\crefname{formula}{formula}{formul\ae}
\creflabelformat{enumi}{#2#1#3)}

\theoremstyle{plain}
\newtheorem{formula}{Formula}
\newtheorem{theorem}{Theorem}
\newtheorem{lemma}{Lemma}
\newtheorem{proposition}{Proposition}

\newtheorem{definition}{Definition}

\theoremstyle{remark}

\newcommand{\convd}{\xrightarrow{d}}
\newcommand{\EE}{\mathbb{E}}
\newcommand{\Pb}{\mathbb{P}}
\newcommand{\NN}{\mathbb{N}}

\newcommand{\R}{\mathbb{R} }

\newcommand{\bdeltab}{\delta}
\newcommand{\Deltab}{{\boldsymbol\Delta}}
\newcommand{\M}{\boldsymbol{M}}

\newcommand{\dfac}[1]{(#1)^!}

\DeclareMathOperator{\BJ}{J}
\DeclareMathOperator{\BY}{Y}

\begin{document}

\title{On the Markov transition kernels for first\hyp{}passage percolation on the ladder}
\author{Eckhard Schlemm}

\address{Wolfson College, Cambridge University}
\email{es555@cam.ac.uk}

\subjclass[2010]{Primary: 60K35, 60J05; secondary: 33C90}
\keywords{central limit theorem, first\hyp{}passage percolation, generating function, Markov chain, transition kernel}

\begin{abstract}
We consider the first\hyp{}passage percolation problem on the random graph with vertex set $\NN \times\{0,1\}$, edges joining vertices at Euclidean distance equal to unity and independent exponential edge weights. We provide a central limit theorem for the first\hyp{}passage times $l_n$ between the vertices $(0,0)$ and $(n,0)$, thus extending earlier results about the almost sure convergence of $l_n/n$ as $n\to\infty$. We use generating function techniques to compute the $n$-step transition kernels of a closely related Markov chain which can be used to calculate explicitly the asymptotic variance in the central limit theorem.
\end{abstract}

\maketitle

\section{Introduction}
The subject of first\hyp{}passage percolation, introduced in \citet{hammersley1965} in 1965, is the study of shortest paths in random graphs. Let $G=(V,E)$ be a graph with vertex set $V$ and unoriented edges $E\subset V^2$ and assume that there is a weight function $w:E\to\R_+$. For vertices $u,v\in V$, a path joining $u$ and $v$ in $G$ is a sequence of vertices $\boldsymbol{p}_{u\to v}=\{u=p_0,p_1,\ldots,p_{n-1},p_n=v\}$ such that $(p_\nu,p_{\nu+1})\in E$ for $0\leq\nu<n$. The weight $w(\boldsymbol{p}_{u\to v})$ of such a path is defined as the sum of the weights of the comprising edges, $w(\boldsymbol{p}_{u\to v})\coloneqq\sum_{\nu=0}^{n-1}{w((p_\nu,p_{\nu+1}))}$. The first\hyp{}passage time between the vertices $u$ and $v$ is denoted by $d_G(u,v)$ and defined as $d_G(u,v)\coloneqq	\inf{\{w(\boldsymbol{p}),\text{$\boldsymbol{p}$ a path joining $u$ and $v$ in $G$}\}}$. 

First-passage percolation can be considered a model for the spread of a fluid through a random porous medium; it differs from ordinary percolation theory in that it puts special emphasis on the dynamical aspect of how long it takes for certain points in the medium to be reached by the fluid. Important applications include the spread of infectious diseases (\cite{altmann1993}) and the analysis of electrical networks \citep{grimmet1984}. Recently, there has also been an increased interest in first\hyp{}passage percolation on graphs where not only the edge-weights, but the edge-structure itself is random. These models, including the Gilbert and Erd{\H{o}}s-R\'enyi random graph, were investigated in \citet{bhamidi2010, sood2005, hofstad2001} and found to be a useful approximation to the internet as well as telecommunication networks.

Usually, however, the underlying graph is taken to be $\mathbb{Z}^2$ and the edge weights are independent random variables with some common distribution $\mathbb{P}$, see e.\,g.\ \citet{smythe1978} and references therein. Interesting mathematical questions arising in this context include asymptotic properties of the sets $B_t\coloneqq\{u\in\mathbb{Z}^2:d_{\mathbb{Z}^2}(0,u)\leq t\}$ \citep{kesten1987,seppaelaiinen1998}, and the limiting behaviour of $d_{\mathbb{Z}^2}((0,0),(n,0))/n$. The latter is known to converge, under weak assumptions on $\mathbb{P}$, to a deterministic constant, called the {\it first\hyp{}passage percolation rate}; the computation of this constant has proved to be a very difficult problem and has not yet been accomplished even for the simplest choices of $\mathbb{P}$ \citep{graham1995}. An exception to this is the case when the underlying graph $G$ is essentially one-dimensional \citep{flaxman2006fpp,renlund2010,schlemm2009}.

In this paper we consider the first\hyp{}passage percolation problem on the ladder $G$, a particular essentially one-dimensional graph, for which the first\hyp{}passage percolation rate is known \citep{renlund2010,schlemm2009}.  We extend the existing results about the almost sure convergence of $d_{G}((0,0),(n,0))/n$ as $n\to\infty$ by providing a central limit theorem as well as giving a complete description of the $n$-step transition kernels of a closely related Markov chain. Our results can be used to explicitly compute the asymptotic variance in the central limit theorem. They are also the basis for the quantitative analysis of any other statistic related to first\hyp{}passage percolation in this model. In particular, knowledge of the higher order transition kernels is the starting point for the computation of the distribution of the rungs which are part of the shortest path. The ladder model is worth studying because it is one of the very few situations where a complete explicit description of the finite\hyp{}time behaviour of the first\hyp{}passage percolation times can be given.

The structure of the paper is as follows: In \cref{section-FPP-ladder} we describe the model and state our results; \cref{section-proofs}, which contains the proofs, is divided in three subsections. The first is devoted to the central limit theorem, the second presents some explicit evaluations of infinite sums which are needed in \cref{section-proofs2}, where the main theorem about the transition kernels is proven. We conclude the paper with a brief discussion.

We use the notation $\bdeltab_{p,q}$ for the Kronecker delta and $\Theta_{p,q}$ as well as $\tilde\Theta_{p,q}$ for versions of the discretized Heaviside step function, precisely
\begin{equation*}
\bdeltab_{p,q}\coloneqq\begin{cases}
                          1 & \text{ if } p=q\\
			  0 &	\text{else}
                         \end{cases},\quad \Theta_{p,q}\coloneqq\begin{cases}
							1 & \text{ if } p\geq q\\
							0 & \text{else}
							\end{cases},\quad \tilde\Theta_{p,q}\coloneqq\begin{cases}
											      1 & \text{ if } p\leq q\\
											      0 & \text{else}
											      \end{cases}.
\end{equation*}
The symbol $(k)^!$ stands for $k!(k+1)!$. We denote by $\R $ the real numbers and by $\mathbb{Z}$ the integers. A subscript $+$ ($-$) indicates restriction to the positive (negative) elements of a set. The symbol $\gamma$ stands for the Euler-Mascheroni constant and $\EE$ denotes expectation.
\section{First-passage percolation on the ladder}
\label{section-FPP-ladder}
\begin{figure}
\includegraphics[width=\textwidth]{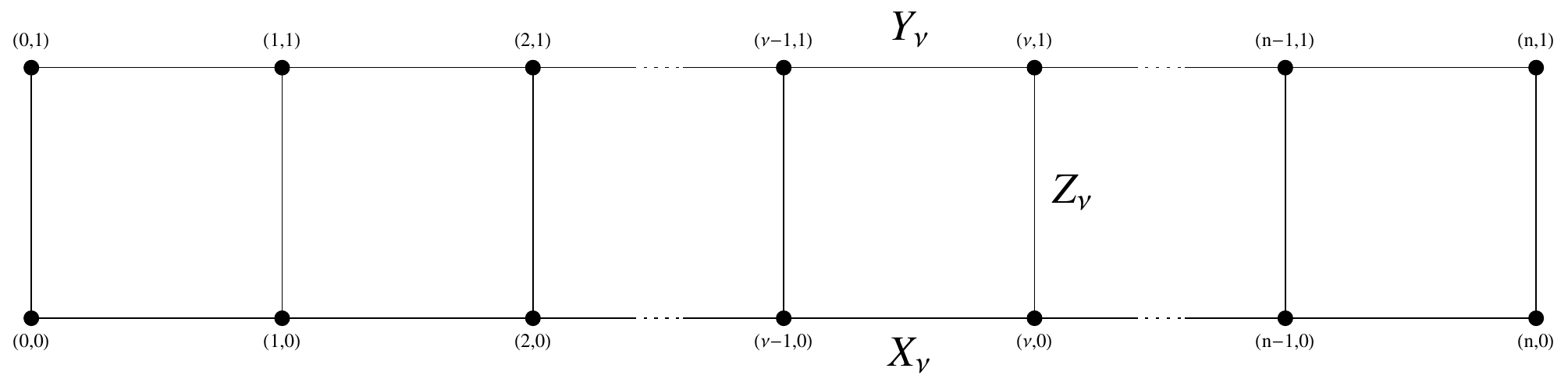}
\caption{The ladder graph $G_n$. The edge weights $X_\nu$, $Y_\nu$, $Z_\nu$ are independent exponential random variables.}
\label{fig-graph}
\end{figure}
In this paper we further investigate a first\hyp{}passage percolation model which has been considered before in \citet{renlund2010} and also in \citet{schlemm2009}. We denote by $G_n$ the graph with vertex set $V_n=\{0,\ldots,n\}\times\{0,1\}$ and edges
\begin{align*}
\mathscr{X}_n &= \{((i,0),(i+1,0)):0\leq i<n\},\\
\mathscr{Y}_n &= \{((i,1),(i+1,1)):0\leq i<n\},\\
\mathscr{Z}_n &= \{((i,0),(i,1)):0\leq i\leq n\}.
\end{align*}
The edge weights are independent exponentially distributed random variables which are labelled in the obvious way $X_i$, $Y_i$ and $Z_i$, see also \cref{fig-graph}. By time-scaling it is no restriction to assume that the edge weights have mean one. We further denote by $l_n$ the length of the shortest path from $(0,0)$ to $(n,0)$ in the graph $G_n$, by $l_n'$ the length of the shortest path from $(0,0)$ to $(n,1)$ and by $\Delta_n$ the difference between the two, $\Delta_n=l_n'-l_n$ It has been shown in \citet{schlemm2009} and also, by a different method, in \citet{renlund2010} that $\lim_{n\to\infty}{l_n/n}$ almost surely exists and is equal to the constant $\chi=\frac{3}{2}-\frac{\BJ_1(2)}{2\BJ_2(2)}$, where $\BJ_\nu$ are Bessel functions of the first kind. (See \cref{definition-Bessel} or \citet{abramowitz1992} for a comprehensive treatment.) This constant is called the first\hyp{}passage percolation rate for our model. The method employed in \citet{schlemm2009} to obtain this result built on \citet{flaxman2006fpp} and consisted in showing that there exists an ergodic $\R \times \R _+^3$\hyp{}valued Markov chain $\M=(M_n)_{n\geq 0}$ with stationary distribution $\tilde\pi$ and a function $f:\R \times \R _+^3\to\R $ such that
\begin{equation*}
\chi = \EE f(M_\infty) = \int_{\R \times\R _+^3}{f(m)\tilde\pi(\mathbbm{d}m)}.
\end{equation*}
Explicitly,
\begin{equation*}
M_n=(\Delta_n,X_{n+1},Y_{n+1},Z_{n+1})\quad \text{and} \quad f:({r},x,y,z)\mapsto\min\{{r}+y+z,x\}.
\end{equation*}
Throughout we write $m=({r},x,y,z)$ for some element of the state space $\R \times\R _+^3$. In order to better understand the first\hyp{}passage percolation problem on the ladder it is important to know the higher-order transition kernels $\tilde K^n:\left(\R\times\R_+^3\right)\times \left(\R\times\R_+^3\right) \to\R _+$ of the Markov chain $\M$. They completely determine the dynamics of the model and are defined as
\begin{equation}
\label{eq-DefKtilde}
\tilde K^n(m',m)\mathbbm{d}m = \Pb\left(M_n\in \mathbbm{d}m|M_0=m'\right),\quad m,m'\in\R \times \R _+^3.
\end{equation}
The first result shows that it is sufficient to control the transition kernels $K^n:\R \times \R \to\R _+$ of the Markov chain $\Deltab=(\Delta_n)_{n\geq 0}$ which are analogously defined as
\begin{equation}
\label{eq-DefK}
K^n({r'},{r})\mathbbm{d}{r} = \Pb\left(\Delta_n\in \mathbbm{d}{r}|\Delta_0={r'}\right),\quad {r},{r'}\in\R .
\end{equation}
For convenience we define $K^0({r'},{r})\coloneqq\bdeltab_{{r'}}({r})$, the Dirac distribution.
 \begin{proposition}
\label{lemma1}
For any $n\geq 1$, denote by $\tilde K^n$ the $n$-step transition kernel of $\M$ defined in \cref{eq-DefKtilde}. Then
\begin{equation}
\tilde K^n\left(m',m\right) = e^{-(x+y+z)}K^{n-1}\left(\min\{{r'}+y',x'+z'\}-\min\{{r'}+y'+z',x'\},{r}\right).
\end{equation}
Moreover, the stationary distribution $\tilde \pi$ of $\M$ is given by
\begin{equation}
\label{eq-statdistM}
\tilde\pi(\mathbbm{d}m) = e^{-(x+y+z)}\mathbbm{d}^3(x,y,z)\pi(\mathbbm{d}{r}),
\end{equation}
where
\begin{equation}
\label{eq-statdistDelta}
\pi(\mathbbm{d}{r}) = \frac{1}{2\BJ_2(2)}e^{-\frac{3}{2}|{r}|}\BJ_1\left(2e^{-\frac{1}{2}|{r}|}\right)\mathbbm{d}{r}
\end{equation}
is the stationary distribution of $\Deltab$.
\end{proposition}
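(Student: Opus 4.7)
My plan is to establish the two parts of the proposition in succession. For the kernel identity, the crucial observation is that $\M$ has a one-step evolution in which the first coordinate transitions deterministically while the remaining three are refreshed independently. Given $M_0 = m' = (r',x',y',z')$, the weights $(X_1,Y_1,Z_1)$ are already realised as $(x',y',z')$, so $\Delta_1$ is a function of $m'$ alone, whereas the last three coordinates of $M_1$ are the fresh independent unit-exponential variables $(X_2,Y_2,Z_2)$. I would derive the explicit form of the deterministic map by writing the last-edge decomposition of shortest paths,
\[
l_1 = \min(l_0 + X_1,\,l'_0 + Y_1 + Z_1),\qquad l'_1 = \min(l'_0 + Y_1,\,l_0 + X_1 + Z_1),
\]
and subtracting $l_0$ from both sides to obtain the difference
\[
\Delta_1(m') = \min\{r'+y',\,x'+z'\} - \min\{r'+y'+z',\,x'\},
\]
which is exactly the point at which $K^{n-1}$ is evaluated in the proposition. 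The full $n$-step factorisation then follows from the Markov property of $\Deltab$: because each driving triple $(X_{n+1},Y_{n+1},Z_{n+1})$ is i.i.d.\ and independent of $\Delta_n$, $\Deltab$ is itself Markov, and conditioning on $M_1$ and integrating out its independent exponential coordinates yields
\[
\tilde K^n(m',m) = e^{-(x+y+z)}\,K^{n-1}\bigl(\Delta_1(m'),\,r\bigr),
\]
with the convention $K^0(r',r) = \bdeltab_{r'}(r)$ handling the base case $n=1$.

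For the stationary distribution $\tilde\pi$, the product form in \cref{eq-statdistM} is essentially dictated by the same structure: because the last three coordinates of $M_n$ are i.i.d.\ unit-exponential at every $n$, their stationary marginal must be $e^{-(x+y+z)}\,\mathbbm{d}^3(x,y,z)$. I would then substitute this ansatz into $\int \tilde K(m',m)\,\tilde\pi(\mathbbm{d}m') = \tilde\pi(\mathbbm{d}m)$ and use the factorisation of $\tilde K$ just derived; after cancelling the exponential in $(x,y,z)$, the invariance of $\tilde\pi$ reduces to the one-dimensional balance equation $\int K(r',r)\pi(\mathbbm{d}r') = \pi(\mathbbm{d}r)$ for the first marginal.

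The substantive obstacle, and the main computational piece in the second part, is verifying this balance equation for the explicit Bessel density \cref{eq-statdistDelta}. I would either cite the derivation from \citet{schlemm2009}, or verify it directly by first computing $K(r',r)$ from the explicit form of $\Delta_1(\cdot)$---this requires splitting the $(x',y',z')$-integration into the regions cut out by the two minima---and then substituting both $K$ and the series expansion $\BJ_1(2 e^{-|r|/2}) = \sum_{k\geq 0}(-1)^k e^{-(2k+1)|r|/2}/\dfac{k}$ into the balance equation and collecting terms using the three-term recurrence for $\BJ_\nu$. By contrast, the kernel factorisation in the first part is a soft consequence of the Markov structure and the deterministic character of the first-coordinate transition, and should be the routine half of the proof.
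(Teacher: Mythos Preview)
Your proposal is correct and follows essentially the same route as the paper: the kernel factorisation is obtained exactly as you describe, by computing $\Delta_1$ deterministically from $m'$ via the last-edge decomposition and then invoking the Markov property of $\Deltab$ together with the independence of the fresh exponential triple, while the product form of $\tilde\pi$ is reduced to the invariance of $\pi$ under $K$ and the explicit Bessel density is cited from \citet[Proposition~5.5]{schlemm2009}. Your offer to verify the balance equation directly via the series for $\BJ_1$ goes beyond what the paper does, but is a reasonable alternative.
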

Next, we state a central limit theorem for first\hyp{}passage percolation times on the ladder which has been implicit in \citet{schlemm2009} and which was the motivation for the current paper. In \citet{ahlberg2009, chatterjee2009arxiv} a central limit theorem has been obtained for first\hyp{}passage times on fairly general one-dimensional graphs by a different method. The question of how to compute the asymptotic variance has, however, not been addressed there. We denote by $\bar f$ the mean corrected function $f-\chi$.
\begin{theorem}
\label{theoremCLT}
For any integer $n\geq 0$, let $l_n$ denote the first\hyp{}passage time between $(0,0)$ and $(n,0)$ in the ladder graph $G_n$. Then there exists a positive constant $\sigma^2$ such that
\begin{equation}
\frac{l_n-n\chi}{\sqrt{n}}\convd N(0,\sigma^2),
\end{equation}
where $N(0,\sigma^2)$ is a normally distributed random variable with mean zero and variance $\sigma^2$ and $\convd$ denotes convergence in distribution. Moreover,
\begin{equation}
\label{eq-formulasigma}
\sigma^2 = \int_{\R \times\R _+^3}{\bar f(m)^2\tilde\pi(\mathbbm{d}m)} + 2 \sum_{n=1}^\infty{\int_{\R \times\R _+^3}{\bar f(m)P^n\bar f(m)\tilde\pi(\mathbbm{d}m})},
\end{equation}
where
\begin{equation}
P^n\bar f(m) = \EE\left[\bar f(M_n)|M_0=m\right]=\int_{\R \times\R _+^3}{\bar f(m')\tilde K^n(m,m')\mathbbm{d}m'}.
\end{equation}
\end{theorem}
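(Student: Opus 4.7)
The strategy is to rewrite $l_n - n\chi$ as a Birkhoff sum along the Markov chain $\M$ and then invoke a CLT for functions of a geometrically ergodic chain. Subtracting $l_n$ from the one-step recursion $l_{n+1} = \min\{l_n + X_{n+1},\, l'_n + Y_{n+1} + Z_{n+1}\}$ derived in \citet{schlemm2009} immediately gives $l_{n+1} - l_n = \min\{X_{n+1}, \Delta_n + Y_{n+1} + Z_{n+1}\} = f(M_n)$, and hence $l_n = \sum_{k=0}^{n-1} f(M_k)$. Since $\chi = \EE_{\tilde\pi} f$, this reduces the theorem to a CLT for $\sum_{k=0}^{n-1}\bar f(M_k)$.

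The analytical heart of the proof is to establish sufficient mixing of $\M$. A case analysis of the recursion for $\Delta_n$, splitting on the signs of $X_{n+1} + Z_{n+1} - \Delta_n - Y_{n+1}$ and of $X_{n+1} - \Delta_n - Y_{n+1} - Z_{n+1}$, reveals the pointwise bound $|\Delta_{n+1}| \leq Z_{n+1}$ for every $n \geq 0$: after a single step the chain is confined to an exponentially small window, independently of its starting point. This bound furnishes a Lyapunov drift inequality with test function $V({r}) = 1 + {r}^2$, and together with a minorization on a compact set $C = [-A,A]$ (which can be read off the explicit conditional density of $\Delta_1$ given $\Delta_0 = {r}$ for ${r} \in C$) the standard Meyn--Tweedie machinery yields geometric ergodicity of $\Deltab$; by \cref{lemma1} this transfers to $\M$.

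With geometric ergodicity in hand, and $\bar f$ bounded in $L^p(\tilde\pi)$ for every $p$ by the domination $0 \leq f(M_n) \leq X_{n+1}$, the Poisson equation $\hat f - P\hat f = \bar f$ admits the solution $\hat f = \sum_{k\geq 0} P^k\bar f$, and the martingale decomposition
\[
\sum_{k=0}^{n-1}\bar f(M_k) \;=\; \sum_{k=1}^{n}\bigl(\hat f(M_k) - P\hat f(M_{k-1})\bigr) \;+\; \hat f(M_0) - \hat f(M_n)
\]
writes the quantity of interest as a square-integrable martingale with stationary increments plus $O(1)$ boundary terms. The martingale CLT, combined with the standard argument that the non-stationary initial law of $M_0$ is absorbed into an $o(\sqrt n)$ error by the geometric convergence of $\tilde K^n$ to $\tilde\pi$, then yields the convergence in distribution; a direct expansion identifies $\EE_{\tilde\pi}(\hat f - P\hat f)^2$ with the series in \cref{eq-formulasigma}. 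The main obstacle is the mixing step: although $|\Delta_n| \leq Z_n$ gives the drift essentially for free, the uniform lower bound on the one-step transition density over $C$ requires a careful bookkeeping of the three regimes in the recursion for $\Delta_n$; once this is done, the remainder of the argument is routine.
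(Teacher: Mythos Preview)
Your overall strategy---represent $l_n$ as a Birkhoff sum over $\M$, establish sufficient ergodicity of $\Deltab$, and invoke a Markov-chain CLT---matches the paper's. The implementations differ. The paper works entirely with the explicit one-step kernel \labelcref{eq-onestepkernel}: it takes the \emph{bounded} Lyapunov function $V(r)=1-e^{-|r|}$, computes $P^1V$ in closed form, and verifies the Aldous--Lov\'asz--Winkler drift criterion to get \emph{uniform} ergodicity of $\Deltab$, after which \citet[Theorem 4.3]{chen1999} is quoted as a black box for both the CLT and the variance series. Your route replaces the kernel computation for the drift step by the pointwise bound $|\Delta_{n+1}|\leq Z_{n+1}$ (which is correct and rather clean: it makes $PV$ uniformly bounded for $V(r)=1+r^2$, so the drift is automatic), keeps the explicit kernel only for the small-set minorization, and unfolds the CLT via the Poisson equation and the martingale CLT rather than citing Chen. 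Each approach has its advantage: the paper's is shorter because the kernel is already available and Chen's theorem packages everything; yours is more self-contained and the bound $|\Delta_{n+1}|\leq Z_{n+1}$ makes the drift step independent of any kernel formula.

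One small slip: the lower domination $0\leq f(M_n)$ is false. Since $f(r,x,y,z)=\min\{r+y+z,x\}$ and $\Delta_n$ can be negative for $n\geq 1$ (indeed your own bound shows $\Delta_{n+1}=-Z_{n+1}$ whenever $\Delta_n+Y_{n+1}+Z_{n+1}\leq X_{n+1}$), $f(M_n)$ need not be non-negative. This does not damage the argument: use instead $|f(r,x,y,z)|\leq x+|r|+y+z$, and note that under $\tilde\pi$ the variable $|r|$ has exponential tails by \cref{eq-statdistDelta}, so $\bar f\in L^p(\tilde\pi)$ for every $p$ and the Poisson solution $\hat f=\sum_{k\geq 0}P^k\bar f$ lies in $L^2(\tilde\pi)$ by geometric ergodicity.
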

\Cref{eq-formulasigma} shows that in order to evaluate the asymptotic variance of the first\hyp{}passage times one must know the transition kernels $\tilde K^n$. In the next theorem we therefore explicitly describe the structure of the transition kernel $K^n$ and thus, by \cref{lemma1}, of $\tilde K^n$. To state the formulas in a compact way we define the five functions
\begin{align*}
\operatorname{S}^1(z) =& \frac{z-2\BJ_2(2\sqrt{z})}{z},\qquad\qquad \operatorname{S}^2(z) = \frac{2(z-1)+2\BJ_0(2\sqrt{z})}{z},\\
\operatorname{G}(z) =& -\frac{z^{3}}{4}\left[5-4\gamma+2\pi\frac{\BY_2(2\sqrt{z})}{\BJ_2(2\sqrt{z})}-2\log z\right],\\
\alpha(z) =& \frac{z^4}{4\sqrt{z}\BJ_2(2\sqrt{z})\left[\sqrt{z}\BJ_0(2\sqrt{z})+(z-1)\BJ_1(2\sqrt{z})\right]}\quad\text{ and }\\
\operatorname{H}(z) =&\frac{z^2}{2(1-z)}\left[3z-3+2\pi\BY_0(2\sqrt{z})+\BJ_0(2\sqrt{z})\left(5-4\gamma-2\log z\right)\right].
\end{align*}
The Bessel functions $\BJ_\nu$ and $\BY_\nu$ are defined in \cref{definition-Bessel} and treated comprehensively in \citet{abramowitz1992}.
\begin{theorem}
\label{theorem1}
The transition kernels $K^n$, defined in \cref{eq-DefK}, satisfy $K^n({r'},{r})=K^n(-{r'},-{r})$. For ${r}\geq0$ the values $K^n({r'},{r})$ are given by
\begin{equation}
\label{eq-kernelMtoDelta}
\begin{cases}
                \sum_{p,q=0}^n {a_{p,q}^ne^{p{r'}-(q+2) {r}}} 			& {r'}\leq 0 	\\
		\frac{(-1)^{n-1}e^{{r'}-(n+1){r}}}{\dfac{n-1}}+\sum_{p=0}^{n-2}{\frac{(-1)^n{r'}e^{-p({r'}-{r})-n{r}}}{\dfac{p}\dfac{n-p-2}}}+\sum_{p,q=0}^n {b_{p,q}^n e^{-p{r'}-(q+2) {r}}} 	& 0<{r'}\leq {r} 	\\
		\frac{(-1)^{n-1}e^{-(n-1){r'}-{r}}}{\dfac{n-1}}+\sum_{p=0}^{n-2}{\frac{(-1)^n{r} e^{-p({r'}-{r})-n{r}}}{\dfac{p}\dfac{n-p-2}}}+\sum_{p,q=0}^n {c_{p,q}^n e^{-p{r'}-(q+2) {r}}}		& {r'}>{r}
                \end{cases},
\end{equation}
where the coefficients $a^n_{p,q}$, $b^n_{p,q}$ and $c^n_{p,q}$ are determined by their generating functions:
\begin{enumerate}[i)]
\item\label{theorem1-item1} the generating functions $\operatorname{A}_{p,q}(z)=\sum_{n=1}^\infty{a^n_{p,q}z^n}$, $p,q\geq 0$, are given by
\begin{subequations}
\label{eq-DefinitionA}
\begin{flalign}
\label{eq-DefinitionA1}\qquad\qquad&\operatorname{A}_{1,q}(z) = \frac{(-z)^q}{\dfac{q}}\alpha(z),&\\
\label{eq-DefinitionAp}\qquad\qquad&\operatorname{A}_{p,q}(z) =\frac{2(-z)^{p-1}}{\dfac{p}}\operatorname{A}_{1,q}(z),\quad p\geq 2,&\\
\label{eq-DefinitionA0}\qquad\qquad&\operatorname{A}_{0,q}(z) = \frac{\operatorname{S}^2(z)}{1-z}\operatorname{A}_{1,q}(z);&
\end{flalign}
\end{subequations}
\item\label{theorem1-item2} the generating functions $\operatorname{B}_{p,q}(z)=\sum_{n=1}^\infty{b^n_{p,q}z^n}$, $p,q\geq 0$, are given by  
\begin{subequations}
\label{eq-DefinitionB}
\begin{flalign}
\label{eq-DefinitionB1}\qquad\qquad&\operatorname{B}_{1,q}(z) =\frac{(-z)^q}{\dfac{q}}\operatorname{G}(z)-\operatorname{A}_{1,q}(z),&\\
\label{eq-DefinitionBp}\qquad\qquad&\operatorname{B}_{p,q}(z) =\frac{2(-z)^{p-1}}{\dfac{p}}\operatorname{B}_{1,q}(z)+\frac{(-z)^{p+q+2}}{\dfac{p}\dfac{q}}\sum_{k=2}^p{\frac{2k+1}{k(k+1)}},\quad p\geq 2,&\\
\label{eq-DefinitionB0}\qquad\qquad&\operatorname{B}_{0,q}(z) = \frac{\operatorname{S}^2(z)}{1-z}\operatorname{B}_{1,q}(z)+\frac{(-z)^q}{\dfac{q}}\operatorname{H}(z);&
\end{flalign}
\end{subequations}
\item\label{theorem1-item3} the generating functions $\operatorname{C}_{p,q}(z)=\sum_{n=1}^\infty{c^n_{p,q}z^n}$, $p,q\geq 0$, are given by
\begin{subequations}
\label{eq-DefinitionC}
\begin{flalign}
\label{eq-DefinitionC0}\qquad\qquad&\operatorname{C}_{0,q}(z) = d_qz^{q+2} + \operatorname{B}_{0,q}(z) - \frac{(-z)^{q+2}}{\dfac{q}},&\\
\label{eq-DefinitionC1}\qquad\qquad&\operatorname{C}_{1,q}(z) =\left[\operatorname{S}^1(z)+\frac{z\operatorname{S}^2(z)}{2(1-z)}\right]\operatorname{A}_{1,q}(z)-\frac{z}{2}\operatorname{C}_{0,q}(z),&\\
\label{eq-DefinitionCp}\qquad\qquad&\operatorname{C}_{p,q}(z) =\frac{2(-z)^{p-1}}{\dfac{p}}\operatorname{C}_{1,q}(z),\quad p\geq 2&
\end{flalign}
\end{subequations}
and the numbers $d_q$ are determined by their generating function $D(z)=\sum_{q=0}^\infty{d_q z^q}$ given by
\begin{equation}
D(z) = \frac{1}{z}\left[\sqrt{z}\BJ_1(2\sqrt{z})\left(2\gamma+\log z\right)-\pi\sqrt{z}\BY_1(2\sqrt{z})-1\right].
\end{equation}
\end{enumerate}
\end{theorem}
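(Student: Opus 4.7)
The plan is to proceed by induction on $n$, using the one-step Markov recursion for $\Deltab$. From $l_n=d_G((0,0),(n,0))$ and $l'_n=d_G((0,0),(n,1))$ one obtains
\begin{equation*}
\Delta_{n+1}=\min\{\Delta_n+Y_{n+1},X_{n+1}+Z_{n+1}\}-\min\{X_{n+1},\Delta_n+Y_{n+1}+Z_{n+1}\},
\end{equation*}
so $\Deltab$ is Markov and $K^1(r',r)$ is obtained by integrating the three i.i.d.\ exponentials $X,Y,Z$ against this expression. A direct case analysis shows that $K^1$ already falls into the piecewise form of \cref{eq-kernelMtoDelta} with $n=1$; this anchors the induction and fixes the initial values of all four coefficient families. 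The symmetry $K^n(r',r)=K^n(-r',-r)$ then follows by noting that the recursion is invariant under the simultaneous swap $X\leftrightarrow Y$ and sign flip $\Delta\mapsto -\Delta$, so it suffices to produce \cref{eq-kernelMtoDelta} for $r\geq 0$.

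The inductive step uses the Chapman--Kolmogorov identity
\begin{equation*}
K^{n+1}(r',r)=\int_{\R} K^1(r',s)\,K^n(s,r)\,\mathbbm{d}s.
\end{equation*}
Inserting the ansatz \cref{eq-kernelMtoDelta} for $K^n$ and the explicit expression for $K^1$, one splits the $s$-integral according to the three regions $s\leq 0$, $0<s\leq r$, $s>r$ (and according to the position of $r'$). Each sub-integral reduces to an elementary integral of the form $\int s^j e^{as}\mathbbm{d}s$ over a half-line or a bounded interval, so the result is again a piecewise exponential expression of the same shape. Matching coefficients of $e^{-pr'-(q+2)r}$, of the diagonal monomials $r'e^{-p(r'-r)-nr}$ and $re^{-p(r'-r)-nr}$, and of the rank-one terms $e^{r'-(n+1)r}$ produces coupled linear recursions for $a^n_{p,q}$, $b^n_{p,q}$, $c^n_{p,q}$, together with an auxiliary scalar recursion for the sequence $d_q$.

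Multiplying each recursion by $z^n$ and summing over $n\geq 1$ converts the convolutional structure into algebraic relations between the generating functions $\operatorname{A}_{p,q}$, $\operatorname{B}_{p,q}$, $\operatorname{C}_{p,q}$ and $D$. The scalar coefficients in these relations are closed-form sums like $\sum_n z^n/\dfac{n}$, $\sum_n (-z)^n/\dfac{n}$, and more delicate logarithmic variants, all of which are evaluated in terms of the Bessel functions $\BJ_\nu$ and $\BY_\nu$ in \cref{section-proofs}. Solving the resulting triangular system then yields \cref{eq-DefinitionA,eq-DefinitionB,eq-DefinitionC}, with $\operatorname{S}^1,\operatorname{S}^2,\operatorname{G},\operatorname{H}$ and $\alpha$ arising as the natural closed forms of these sums and $D$ emerging from the auxiliary recursion that drives the rank-one diagonal pieces.

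The main obstacle is the combinatorial bookkeeping in the Chapman--Kolmogorov step: with $K^1(r',s)$ and $K^n(s,r)$ each piecewise in three regions there are up to nine region combinations, and each contributes to several different exponential monomials in $(r',r)$, with polynomial prefactors in $s$ of degree $0$ or $1$ that must be correctly routed either into the families $a,b,c$ or into the rank-one diagonal terms $e^{r'-(n+1)r}/\dfac{n-1}$ and $e^{-(n-1)r'-r}/\dfac{n-1}$. Once these contributions are correctly separated, the passage to generating functions and the identification of the Bessel closed forms are routine applications of the identities prepared in the preceding subsection.
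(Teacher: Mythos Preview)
Your plan is essentially the paper's own approach: induction on $n$ via Chapman--Kolmogorov, splitting the $s$-integral into the three regions, reducing to elementary integrals, and then handling the resulting coefficient recursions through generating functions and the Bessel identities collected in \cref{section-summation}. Two small differences are worth flagging. First, your symmetry argument (exchange $X\leftrightarrow Y$ and flip the sign of $\Delta$) is a probabilistic shortcut; the paper instead proves $K^n(r',r)=K^n(-r',-r)$ by a direct induction using Chapman--Kolmogorov and the already-known symmetry of $K^1$. Second, and more importantly, you phrase the generating-function step as \emph{solving} a triangular system to obtain \cref{eq-DefinitionA,eq-DefinitionB,eq-DefinitionC}, whereas the paper runs the logic in the opposite direction: it \emph{defines} $a^n_{p,q},b^n_{p,q},c^n_{p,q}$ through those generating functions and then verifies (Lemmas~\ref{lemmarecursionBeta}--\ref{lemmarecursionC}) that they satisfy the exact recursions produced by the Chapman--Kolmogorov computation. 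The verification route is logically cleaner and avoids having to justify uniqueness of the solution; in particular, the delicate point you allude to as routing the ``rank-one diagonal pieces'' is precisely the content of Lemma~\ref{lemmarecursionBeta}, which shows that the spurious cross-terms $\beta^{n+1}_q$ collapse to a single Kronecker delta, so that the inductive form is preserved.
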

By the properties of generating functions the coefficients $a^n_{p,q}$, $b^n_{p,q}$ and $c^n_{p,q}$ are determined by the derivatives of $\operatorname{A}_{p,q}$, $\operatorname{B}_{p,q}$ and $\operatorname{C}_{p,q}$ evaluated at zero. These derivatives are routinely calculated to any order with the help of computer algebra systems such as Mathematica. \Cref{tab:coeff} exemplifies the theorem by reporting the values of the coefficients $a^n_{p,q}$, $b^n_{p,q}$, $c^n_{p,q}$ in the case $n=4$. Using these results the expression \labelcref{eq-formulasigma} for $\sigma^2$ can be evaluated explicitly in terms of certain integrals of hypergeometric functions; the computations, however, are quite involved and the final result rather lengthy, so we decided not to include them here.
\begin{table}
\centering
\subfloat[values of $a_{p,q}^4$]{\label{tab:coeffa}
  \begin{tabular}{ccccccc}
		   &&	&\multicolumn{4}{c}{$q$}\\
		   &&	&$0$	& $1$			& $2$			& $3$			\\[3pt]
\cline{4-7}\\[1pt]
\multirow{5}{*}{p} &\multicolumn{1}{c|}{$0$}&$\,$	&$\frac{115}{96}$	& $-\frac{17}{36}$	& $\frac{1}{24}$	& $0$		\\[3pt]
		   &\multicolumn{1}{c|}{$1$}&$\,$ 	&$\frac{11}{36}$	& $-\frac{1}{36}$	& $\frac{1}{12}$	& $-\frac{1}{144}$\\[3pt]
		   &\multicolumn{1}{c|}{$2$}&$\,$ 	&$-\frac{11}{108}$	& $\frac{1}{12}$	& $-\frac{1}{72}$	& $0$		\\[3pt]
		   &\multicolumn{1}{c|}{$3$}&$\,$ 	&$\frac{1}{72}$	& $\frac{1}{144}$	& $0$			& $0$			\\[3pt]
		   &\multicolumn{1}{c|}{$4$}&$\,$ 	&$-\frac{1}{1440}$	& $0$			& $0$			& $0$		
\end{tabular}
}
\hspace{13pt}\subfloat[values of $b_{p,q}^4$]{\label{tab:coeffb}
  \begin{tabular}{cccc}
		   \multicolumn{4}{c}{$q$}\\
		   $0$	& $1$			& $2$			& $3$		\\[3pt]
\cline{1-4}\\[1pt]
$\frac{721}{432}$	& $-\frac{13}{12}$	& $\frac{1}{12}$	& $0$		\\[3pt]
$-\frac{241}{540}$	& $\frac{17}{48}$	& $\frac{1}{36}$	& $0$		\\[3pt]
$\frac{3}{16}$		& $\frac{1}{36}$	& $0$			& $0$		\\[3pt]
$\frac{1}{216}$		& $0$			& $0$			& $0$		\\[3pt]
$0$			& $0$			& $0$			& $0$			
\end{tabular}
}
\hspace{13pt}\subfloat[values of $c_{p,q}^4$]{\label{tab:coeffc}
    \begin{tabular}{cccc}
		   \multicolumn{4}{c}{$q$}\\
		   $0$	& $1$			& $2$			& $3$		\\[3pt]
\cline{1-4}\\[1pt]
$\frac{721}{432}$	& $-\frac{13}{12}$	& $\frac{5}{18}$	& $0$		\\[3pt]
$-\frac{241}{540}$	& $\frac{17}{48}$	& $\frac{1}{36}$	& $0$		\\[3pt]
$-\frac{1}{144}$	& $\frac{1}{36}$	& $0$			& $0$		\\[3pt]
$\frac{1}{216}$		& $0$			& $0$			& $0$		\\[3pt]
$0$			& $0$			& $0$			& $0$			
\end{tabular}
}
\caption{Coefficients of the four-step transition kernel $K^4$, as given in \cref{eq-kernelMtoDelta}} 
\label{tab:coeff}
\end{table}

\section{Proofs}
\label{section-proofs}
\subsection{Proofs of Proposition \ref{lemma1} and Theorem \ref{theoremCLT}}
\label{section-proofs1}
In this section we present the proofs of the relation between the Markov chains $\M$ and $\Deltab$ and of the central limit theorem.

\begin{proof}[Proof of \cref{lemma1}]
Since
\begin{align*}
\Delta_n=&l_n'-l_n\\
	=&\min\{l_{n-1}'+Y_n,l_{n-1}+X_n+Z_n\} - \min\{l_{n-1}'+Y_n+Z_n,l_{n-1}+X_n\}\\
	=&\min\{\Delta_{n-1}+Y_{n-1},X_{n-1}+Z_{n-1}\}-\min\{\Delta_{n-1}+Y_{n+1}+Z_{n-1},X_{n-1}\}
\end{align*}
it follows at once that $M_0$ being equal to some $m'=({r'},x',y',z')$ implies $\Delta_1=\min\{{r'}+y',x'+z'\}-\min\{{r'}+y'+z',x'\}$ and thus the Markov property of $\Deltab$ together with the independence of the edge weights implies that for any integer $n>1$ the conditional probability $\Pb\left(M_n\in \mathbbm{d}m|M_0=m'\right)$ is given by
\begin{align*}
e^{-x+y+z}\mathbbm{d}^3(x,y,z)\Pb\left(\Delta_n\in\mathbbm{d}{r}|\Delta_1=\min\{{r'}+y',x'+z'\}-\min\{{r'}+y'+z',x'\}\right).
\end{align*}
The homogeneity of the Markov chain $\Deltab$ then implies \cref{eq-kernelMtoDelta} because for $n=1$ we clearly have
\begin{equation*}
\Pb\left(M_1\in \mathbbm{d}m|M_0=m'\right)=e^{-(x+y+z)}\bdeltab_{\min\{{r'}+y',x'+z'\}-\min\{{r'}+y'+z',x'\}}({r})\mathbbm{d}m.
\end{equation*}
\Cref{eq-statdistM} about the stationary distribution of $\M$ is a direct consequence of the fact that the edge weights $X_{n+1}$, $Y_{n+1}$ ,$Z_{n+1}$ are independent of $\Delta_n$ and expression \labelcref{eq-statdistDelta} was derived in \citet[Proposition 5.5.]{schlemm2009}.
\end{proof}
Next, we prove the central limit theorem and the formula for the asymptotic variance $\sigma^2$.
\begin{proof}[Proof of \cref{theoremCLT}]
We apply the general result \citet[Theorem 4.3.]{chen1999} for functionals of ergodic Markov chains on general state spaces. We first note that
\begin{equation*}
\int_{\R\times \R_+^3}f(m)^2\tilde\pi(\mathbb{d}m)=\frac{2\BJ_1(2)-3\BJ_0(2)+{}_2\operatorname{F}_3\left(\{1,1\},\{2,2,2\};-1\right)-1}{\BJ_2(2)}<\infty.
\end{equation*}
It then suffices to prove that the Markov chain $\M$ is uniformly ergodic, which is equivalent to showing that the Markov chain $\Deltab$ is uniformly ergodic. We use the drift criterion \citet[Theorem B]{aldous1997}, which asserts that if there is a sufficiently strong drift towards the centre of the state space of a Markov chain, it is uniformly ergodic. Using the Lyapunov function $V(r) = 1-e^{-|r|}$ as well as 
\begin{equation}
\label{eq-onestepkernel}
K^{}({r'},{r})=\begin{cases}
                           e^{-|{r}|} & \text{if }{r'}<{r}<0 \vee {r'}>{r}>0\\
			    e^{-|{r'}-2{r}|}	& \text{else}
                           \end{cases}
\end{equation}
for the one step transition kernel of $\Deltab$ \citep[Proposition 5.1]{schlemm2009} we obtain
\begin{align*}
\psi(r) \coloneqq P^1V(r) =& 1-\EE_r e^{-|\Delta_1|}\\
	=& 1-\int_\R{e^{-|\rho|}K^{}(r,\rho)d\rho}=\frac{1}{6}\left[3-2e^{-|r|}+e^{-2|r|}\right].
\end{align*}
It is easy to check that
\begin{equation*}
\psi(r)\leq V(r)-\frac{1}{10},\quad |r|\geq 1\quad \text{and}\quad \sup_{|r|\leq 1}\psi(r)\leq \sup_{r\in \R}\psi(r)=\frac{1}{2}<\infty.
\end{equation*}
Since the interval $[-1,1]$ is compact and has positive invariant measure it is a small set \citep[Remark 2.7.]{Nummelin1982} and it follows that $\Deltab$ is uniformly ergodic, which completes the proof.
\end{proof}

\subsection{Summation formulas}
\label{section-summation}

In this section we derive some summation formulas which we will use in the proofs in \cref{section-proofs2}. Some of them are well\hyp{}known, others can be checked with computer algebra systems such as Mathematica, a few (\cref{formula-U1,formula-Upsilon1,formula-U2,formula-Upsilon2}) seem to be new. The sums will be evaluated explicitly in terms of Bessel and generalized hypergeometric functions, which we now define.
\begin{definition}[Bessel function]
\label{definition-Bessel}
Let $\lambda$ be a real number in $\R \backslash\mathbb{Z}_-$. The {\it Bessel function of the first kind of order $\lambda$}, denoted by $\BJ_\lambda$, is defined by the series representation
\begin{equation}
 \label{eq-DefJ}
\BJ_\lambda(x) = \sum_{k=0}^\infty{\frac{(-1)^k}{\operatorname{\Gamma}(k)\operatorname{\Gamma}(k+\lambda+1)}\left(\frac{x}{2}\right)^{2k+\lambda}}.
\end{equation}
For any integer $\nu$ the {\it Bessel function of the second kind of order $\nu$}, denoted by $\BY_\nu$, is defined as
\begin{equation}
\label{eq-DefY}
\BY_\nu(x)=\lim_{\lambda\to\nu}{\frac{\BJ_\lambda(x)\cos{\lambda\pi}-\BJ_{-\lambda}(x)}{\sin{\lambda\pi}}}.
\end{equation}
\end{definition}
It is well\hyp{}known that Bessel functions satisfy the recurrence equation $\BJ_\nu(x)=\frac{2(\nu-1)}{x}\BJ_{\nu-1}(x)-\BJ_{\nu-2}(x)$ \citep[Formula 03.01.17.0002.01]{wolframfunctionsite}, which we use without further mentioning to simplify various expressions.
\begin{definition}[Generalized hypergeometric function]
For non\hyp{}negative integers $p\leq q$ and complex numbers $\boldsymbol{a}=a_1,\ldots,a_p$ and $\boldsymbol{b}=b_1,\ldots,b_q$, $b_j\notin\mathbb{Z}_-$, the {\it generalized hypergeometric function of order $(p,q)$ with coefficients $\boldsymbol{a}$, $\boldsymbol{b}$}, denoted by ${}_p\operatorname{F}_q\left(\boldsymbol{a},\boldsymbol{b};\cdot\right)$, is defined by the series representation
\begin{equation}
\label{eq-DefF}
{}_p\operatorname{F}_q\left(\boldsymbol{a},\boldsymbol{b};x\right)=\sum_{k=0}^\infty{\frac{(a_1)_k\cdots(a_p)_k}{(b_1)_k\cdots(b_q)_k}\frac{x^k}{k!}},
\end{equation}
where $(z)_k$ denotes the rising factorial defined by $(z)_k=\operatorname{\Gamma}(z+k)/\operatorname{\Gamma}(z)$.
\end{definition}
In particular we will encounter the regularized confluent hypergeometric functions ${}_0\tilde{\operatorname{F}}_1$, which are	 defined by ${}_0\tilde{\operatorname{F}}_1\left(\{\},\{b\};-z\right)=\frac{1}{\operatorname{\Gamma}(b)}{}_0\operatorname{F}_1\left(\{\},\{b\};-z\right)$; in the next lemma we relate their derivative with respect to $b$ to certain sums involving the harmonic numbers $\operatorname{H}_k\coloneqq\sum_{n=1}^k{1/n}$.
\begin{lemma}
\label{lemma-HG0F1}
Denote by ${}_0\tilde{\operatorname{F}}_1$ the regularized version of the hypergeometric function ${}_0\operatorname{F}_1$. It then holds that
\begin{enumerate}[i)]
 \item\label{lemma-HG0F1-item1} for every positive integer $\nu$,
\begin{subequations}
\label{eq-HG0F1}
\begin{equation}
\label{eq-HG0F1-1}
\left.\frac{\mathbbm{d}}{\mathbbm{d}b}{}_0\tilde{\operatorname{F}}_1\left(\{\},\{b\};-z\right)\right|_{b=\nu} = \gamma z^{\frac{1-\nu}{2}}\BJ_{\nu-1}\left(2\sqrt{z}\right)-\sum_{k=0}^\infty{\frac{(-z)^k}{k!(k+\nu-1)!}\operatorname{H}_{k+\nu-1}}.
\end{equation}
\item\label{lemma-HG0F1-item2} for every positive integer $\nu$,
\begin{align}
\left.\frac{\mathbbm{d}}{\mathbbm{d}b}{}_0\tilde{\operatorname{F}}_1\left(\{\},\{b\};-z\right)\right|_{b=-\nu}=&(-1)^{\nu-1}\gamma z^{\frac{\nu+1}{2}}\BJ_{\nu+1}\left(2\sqrt{z}\right)-\sum_{k=0}^\infty{\frac{(-z)^{k+\nu+1}}{k!(k+\nu+1)!}\operatorname{H}_k}\notag\\
\label{eq-HG0F1-2}  &+(-1)^\nu\sum_{k=0}^\nu{\frac{z^k(\nu-k)!}{k!}}.
\end{align}
\end{subequations}
\end{enumerate}
\end{lemma}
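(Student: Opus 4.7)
The plan is to differentiate the regularised series
\[
{}_0\tilde{\operatorname{F}}_1(\{\},\{b\};-z)=\sum_{k=0}^\infty\frac{(-z)^k}{\Gamma(b+k)\,k!}
\]
term by term in $b$. Because $b\mapsto 1/\Gamma(b+k)$ is entire and the series converges uniformly on compact subsets of the $b$\hyp{}plane for every fixed $z$, such differentiation is justified. At regular points one may write $\frac{d}{db}\Gamma(b+k)^{-1}=-\psi(b+k)/\Gamma(b+k)$; the apparent singularities at non\hyp{}positive integer arguments of $\psi$ are removed by the simultaneous zeros of $1/\Gamma$, so the quotient $\psi/\Gamma$ admits an entire extension. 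This yields
\[
\frac{\partial}{\partial b}{}_0\tilde{\operatorname{F}}_1(\{\},\{b\};-z)=-\sum_{k=0}^\infty\frac{(-z)^k\,\psi(b+k)}{\Gamma(b+k)\,k!},
\]
and it remains to evaluate this sum at $b=\nu$ and $b=-\nu$.

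To prove \cref{eq-HG0F1-1}, I would evaluate the identity at $b=\nu$, where every factor is regular. The identity $\psi(\nu+k)=-\gamma+\operatorname{H}_{\nu+k-1}$ (valid for $\nu\geq 1$, $k\geq 0$) splits the sum into a $-\gamma$ part and a harmonic part. The first reduces to
\[
\gamma\sum_{k=0}^\infty\frac{(-z)^k}{k!\,(k+\nu-1)!}=\gamma\,z^{(1-\nu)/2}\BJ_{\nu-1}(2\sqrt{z})
\]
by \cref{eq-DefJ}, while the second is literally the claimed harmonic sum.

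For \cref{eq-HG0F1-2} the evaluation at $b=-\nu$ requires isolating the indices $0\leq k\leq \nu$ for which $b+k=k-\nu\in\{-\nu,\ldots,0\}$. Using the Laurent expansions $\Gamma(z)\sim(-1)^m/[m!(z+m)]$ and $\psi(z)\sim -(z+m)^{-1}$ near $z=-m$, one obtains the key limit $\psi(z)/\Gamma(z)\to(-1)^{m+1}m!$ as $z\to -m$. Applying this with $m=\nu-k$ produces the polynomial contribution
\[
-\sum_{k=0}^\nu\frac{(-z)^k(-1)^{\nu-k+1}(\nu-k)!}{k!}=(-1)^\nu\sum_{k=0}^\nu\frac{z^k(\nu-k)!}{k!}.
\]
For $k\geq\nu+1$ every factor is regular and the argument used in \cref{lemma-HG0F1-item1} (after the substitution $j=k-\nu-1$) delivers the Bessel term $(-1)^{\nu-1}\gamma\,z^{(\nu+1)/2}\BJ_{\nu+1}(2\sqrt{z})$ together with the harmonic tail $-\sum_{j\geq 0}(-z)^{j+\nu+1}\operatorname{H}_j/[j!(j+\nu+1)!]$.

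The main obstacle is the careful Laurent\hyp{}expansion bookkeeping around the non\hyp{}positive integers: one must verify that the simple pole of $\psi(b+k)$ is cancelled exactly by the simple zero of $1/\Gamma(b+k)$ for $0\leq k\leq\nu$, and track the sign $(-1)^{\nu-k+1}$ that emerges in the limit. Once this is pinned down, the remaining manipulations --- shifting the summation index and recognising the Bessel series \cref{eq-DefJ} --- are routine.
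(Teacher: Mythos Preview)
Your proposal is correct and follows essentially the same route as the paper: term\hyp{}by\hyp{}term differentiation of the regularised series, the identity $\psi(n)=-\gamma+\operatorname{H}_{n-1}$ for part~\labelcref{lemma-HG0F1-item1}, and the limit $\psi(z)/\Gamma(z)\to(-1)^{m+1}m!$ at $z=-m$ for the finitely many singular indices in part~\labelcref{lemma-HG0F1-item2}. You supply slightly more justification (uniform convergence, the Laurent\hyp{}expansion bookkeeping) than the paper, which simply cites the residue of $\Gamma$ at $-m$, but the argument is the same.
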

\begin{proof}
For part \labelcref{lemma-HG0F1-item1}, we differentiate the series representation \labelcref{eq-DefF} term by term. Using the definition of the Digamma function $\digamma$ as the logarithmic derivative of the Gamma function $\operatorname{\Gamma}$ as well as the relation $\digamma(k)=-\gamma+\operatorname{H}_{k-1}$, $k\in\mathbb{Z}_+$ \citep[Formula 06.14.27.0003.01]{wolframfunctionsite}, we get
\begin{equation*}
\left.\frac{\mathbbm{d}}{\mathbbm{d}b}\frac{1}{\operatorname{\Gamma}(k+b)}\right|_{b=\nu}=-\frac{\digamma(k+\nu)}{\operatorname{\Gamma}(k+\nu)}=\frac{\gamma-\operatorname{H}_{k+\nu-1}}{(k+\nu-1)!}
\end{equation*}
and thus
\begin{equation*}
\left.\frac{\mathbbm{d}}{\mathbbm{d}b}{}_0\tilde{\operatorname{F}}_1\left(\{\},\{b\};-z\right)\right|_{b=\nu} = \gamma\sum_{k=0}^\infty{\frac{(-z)^k}{k!(k+\nu-1)!}}-\sum_{k=0}^\infty{\frac{(-z)^k}{k!(k+\nu-1)!}\operatorname{H}_{k+\nu-1}} 
\end{equation*}
which concludes the proof of the first part of the lemma. Part \labelcref{lemma-HG0F1-item2} is shown in a completely analogous way, using the relation $\lim_{\mu\to -m}\digamma(\mu)/\operatorname{\Gamma}(\mu)=(-1)^{m+1}m!$ for every non\hyp{}negative integer $m$, which follows from the fact \citet[Formula 06.05.04.0004.01]{wolframfunctionsite} that the Gamma function has a simple pole at $-m$ with residue $(-1)^m/m!$.
\end{proof}

\begin{formula}
\label{formula-S1}
\begin{equation*}
\sum_{k=1}^\infty{\frac{(-z)^k}{k!(k+2)!}}=\frac{2\BJ_2(2\sqrt{z})-z}{2z}=-\frac{1}{2}\operatorname{S}^1(z).
\end{equation*}
\end{formula}
\begin{proof}
 Immediate from the definition \labelcref{eq-DefJ} of Bessel functions.
\end{proof}

\begin{formula}
\label{formula-S2}
\begin{equation*}
\sum_{k=1}^\infty{\frac{(-z)^k}{(k+1)!^2}}= \frac{1-z-\BJ_0(2\sqrt{z})}{z} = -\frac{1}{2}\operatorname{S}^2(z).
\end{equation*}
\end{formula}
\begin{proof}
This is also clear from the definition \labelcref{eq-DefJ} of Bessel functions.
\end{proof}

\begin{formula}
\label{formula-S3}
\begin{equation*}
\operatorname{S}^3(z)=\sum _{n=q+2}^{\infty } \frac{(-z)^{n-q}}{\dfac{n-q-2} (n-q)^2}=1-\BJ_2\left(2\sqrt{z}\right)-\sqrt{z} \BJ_1\left(2 \sqrt{z}\right).
\end{equation*}
\end{formula}
\begin{proof}
Shifting the index of summation $n$ by $q+2$ we obtain
\begin{align*}
\operatorname{S}^3(z)=&\sum _{n=0}^{\infty } \frac{(-z)^{n+2}}{n! (n+1)! (n+2)^2}=\sum _{n=0}^{\infty }(-z)^{n+2}\left[\frac{1}{(n+1)!(n+2)!}-\frac{1}{(n+2)!^2}\right]\\
      =& \sum _{n=0}^{\infty }\frac{(-z)^{n+1}}{\dfac{n}}+z-\sum _{n=0}^{\infty }\frac{(-z)^n}{n!^2}+1-z\\
      =&1-\BJ_2\left(2\sqrt{z}\right)-\sqrt{z} \BJ_1\left(2 \sqrt{z}\right)
\end{align*}
by the definition \labelcref{eq-DefJ} of Bessel functions.
\end{proof}

\begin{formula}
\label{formula-Sigma1}
\begin{equation*}
\operatorname{\Sigma}^1(\zeta z)=\sum_{q=0}^\infty\sum _{k=0}^{q-2} \frac{(-\zeta z)^q}{\dfac{k} \dfac{q-k-2} (k+2)^2}=\frac{\BJ_1\left(2\sqrt{\zeta z}\right)}{\sqrt{\zeta z}}\operatorname{S}^3(\zeta z).
\end{equation*}
\end{formula}
\begin{proof}
By Fubini's theorem we can interchange the order of summation and then shift the summation index $q$ by $k+2$ to get
\begin{align*}
\operatorname{\Sigma}^1(\zeta z)=&\sum _{k=0}^\infty\frac{(-\zeta z)^{k+2}}{\dfac{k}(k+2)^2}\sum_{q=0}^\infty \frac{(-\zeta z)^q}{\dfac{q}}.
\end{align*}
By definition \labelcref{eq-DefJ}, the second sum equals $\BJ_1\left(2\sqrt{\zeta z}\right)/\sqrt{\zeta z}$ and so the claim follows with \cref{formula-S3}.
\end{proof}

\begin{formula}
\label{formula-Sigma2}
\begin{equation*}
\operatorname{\Sigma}^2(\zeta z)=\sum_{q=0}^\infty{\sum _{k=1}^{q-2} \frac{2(-\zeta z)^q}{k! (k+2)! \dfac{q-k-2}}}=\left[2\BJ_2\left(2\sqrt{\zeta z}\right)-\zeta z\right]\sqrt{\zeta z}\BJ_1\left(2\sqrt{\zeta z}\right).
\end{equation*}
\end{formula}
\begin{proof}
We can interchange the order of summation and shift the index $q$ by $k+2$ to obtain
\begin{equation*}
\operatorname{\Sigma}^2(\zeta z)=\sum _{k=1}^\infty\frac{2(-\zeta z)^{k+2}}{k!(k+2)!}\sum_{q=0}^\infty \frac{(-\zeta z)^q}{\dfac{q}}
\end{equation*}
The first factor equals $2\zeta z\BJ_2\left(2\sqrt{\zeta z}\right)-(\zeta z)^2$ and the second factor equals $\BJ_1\left(2\sqrt{\zeta z}\right)/\sqrt{\zeta z}$, both by definition \labelcref{eq-DefJ}, and so the claim follows.
\end{proof}

\begin{formula}
\label{formula-T1}
\begin{equation*}
\operatorname{T}^1(\zeta z)=\sum_{q=0}^\infty{\frac{(-\zeta z)^q}{\dfac{q}}}=\frac{\BJ_1\left(2\sqrt{\zeta z}\right)}{\sqrt{\zeta z}}.
\end{equation*}
\end{formula}
\begin{proof}
Clear from the definition \labelcref{eq-DefJ}.
\end{proof}

\begin{formula}
\label{formula-T2}
\begin{equation*}
\operatorname{T}^2(\zeta z) =\sum_{q=0}^\infty{\frac{(-\zeta z)^{q+1}q}{(q+1)!^2}}=1-\BJ_0\left(2\sqrt{\zeta z}\right)-\sqrt{\zeta z}\BJ_1\left(2\sqrt{\zeta z}\right).
\end{equation*}
\end{formula}
\begin{proof}
This follows from the decomposition $\frac{q}{(q+1)!^2}=\frac{1}{\dfac{q}}-\frac{1}{(q+1)!^2}$ and definition \labelcref{eq-DefJ}.
\end{proof}

\begin{formula}
\label{formula-U1}
\begin{align*}
\operatorname{U}^1(z)=&\sum _{k=1}^{\infty } \frac{(-z)^{k+2} }{ k! (k+2)!}\sum _{l=2}^k \frac{2 l+1}{l (l+1)}\\
	 =&\frac{3z^2}{4} - z - 2 -  \pi  z \BY_2\left(2 \sqrt{z}\right)+\frac{\sqrt{z}}{2} \BJ_1\left(2 \sqrt{z}\right) \left[4 \gamma -3 +2\log z\right]\\
  &+\BJ_0\left(2\sqrt{z}\right)\left[1+\frac{5z}{2}-2 \gamma  z-z \log z\right]. 
\end{align*}
\end{formula}
\begin{proof}
First we note that
\begin{equation}
\label{eq-formulaU1-1}
\sum_{l=2}^k{\frac{2l+1}{l(l+1)}}=\sum_{l=2}^k\left[\frac{1}{l}+\frac{1}{l+1}\right]=2\operatorname{H}_k-\frac{5k+3}{2(k+1)},
\end{equation}
where $\operatorname{H}_k$ denotes the $k^{\text{th}}$ harmonic number. The first contribution to $\operatorname{U}^1(z)$ can therefore be computed as
\begin{align*}
\label{eq-formulaU1-2}
\frac{1}{2}\sum_{k=1}^\infty{\frac{(5k+3)(-z)^k}{k!(k+2)!(k+1)}}=&-\frac{1}{z}-\frac{3}{4}-\frac{1}{z}\sum_{k=0}^\infty{\frac{(-z)^k}{\dfac{k}}}+\frac{5}{2}\sum_{k=0}^\infty{\frac{(-z)^k}{k!(k+2)!}}\notag\\
  =&\frac{1}{4z^{3/2}}\left[4\BJ_1\left(2\sqrt{z}\right)+10\sqrt{z}\BJ_2\left(2\sqrt{z}\right)-3z^{3/2}-4\sqrt{z}\right].
\end{align*}
The other contribution to $\operatorname{U}^1(z)$ is, with the help of \cref{lemma-HG0F1},\labelcref{lemma-HG0F1-item2}, obtained as
\begin{equation*}
2\sum_{k=1}^\infty{\frac{(-z)^k}{k!(k+2)!}\operatorname{H}_k}=\frac{2}{z^2}\left[\gamma z\BJ_2\left(2\sqrt{z}\right)-z-1-\left.\frac{\mathbbm{d}}{\mathbbm{d}b}{}_0\tilde{\operatorname{F}}_1\left(\{\},\{b\};-z\right)\right|_{b=-1}\right].
\end{equation*}
By \citet[Formula 07.18.20.0015.01]{wolframfunctionsite},
\begin{equation*}
\left.\frac{\mathbbm{d}}{\mathbbm{d}b}{}_0\tilde{\operatorname{F}}_1\left(\{\},\{b\};-z\right)\right|_{b=-1}=\frac{1}{2}\left[\pi z\BY_2\left(2\sqrt{z}\right)-\sqrt{z}\BJ_1\left(2\sqrt{z}\right)\left[2+\log z\right]+\BJ_0\left(2\sqrt{z}\right)\left[z\log z-1\right]\right]
\end{equation*}
and the result follows after combining the last four displayed equations.
\end{proof}

\begin{formula}
\label{formula-Upsilon1}
\begin{equation*}
\operatorname{\Upsilon}^1(\zeta z)=\sum _{q=0}^{\infty }\sum _{k=1}^{q-2} \frac{(-\zeta z)^q }{\dfac{q-k-2}k!
(k+2)!}\sum _{l=2}^k \frac{2 l+1}{l (l+1)}=\frac{\BJ_1\left(2\sqrt{\zeta z}\right)}{\sqrt{\zeta z}}\operatorname{U}^1(\zeta z).
\end{equation*}
\end{formula}
\begin{proof}
Interchanging the order of the first two summations and shifting the index $q$ by $k+2$  we find that
\begin{equation*}
\operatorname{\Upsilon}^1(\zeta z)=\sum _{k=1}^\infty\frac{(-\zeta z)^{k+2}}{k!(k+2)!} \sum _{q=0}^\infty\frac{(-\zeta z)^q }{\dfac{q}}\sum _{l=2}^k \frac{2 l+1}{l (l+1)}.
\end{equation*}
As before the sum in the middle equals $\BJ_1\left(2\sqrt{\zeta z}\right)/\sqrt{\zeta z}$ and so the claim follows with \cref{formula-U1}.
\end{proof}

\begin{formula}
\label{formula-S4}
\begin{equation*}
\operatorname{S}^4(z)=\sum _{n=q+2}^{\infty } \frac{(-z)^{n-q}}{\dfac{n-q-2} (n-q-1)^2}=z^2{}_2\operatorname{F}_3\left(\{1,1\},\{2,2,2\};-z\right).
\end{equation*}
\end{formula}
\begin{proof}
After shifting the index $n$ by $q+2$ this follows immediately from the definition \labelcref{eq-DefF} of the hypergeometric function.
\end{proof}

\begin{formula}
\label{formula-U2}
\begin{align*}
\operatorname{U}^2(z)=&\sum _{k=1}^{\infty } \frac{(-z)^{k+2} }{(k+1)!^2}\sum _{l=2}^k \frac{2 l+1}{l (l+1)}\\
      =&\frac{z}{2}\left[-5+3 z+2 \pi  \BY_0\left(2\sqrt{z}\right)-2 z {}_2\operatorname{F}_3\left(\{1,1\},\{2,2,2\},-z\right)+\BJ_0\left(2 \sqrt{z}\right) (5-4 \gamma-2 \log z)\right].
\end{align*}
\end{formula}
\begin{proof}
The proof is analogous to \cref{formula-U1}. Using \cref{eq-formulaU1-1} the first contribution to $\operatorname{U}^2(z)$ is
\begin{align}
\label{eq-formulaU2-1}
\frac{1}{2}\sum _{k=1}^{\infty } \frac{(5k+3)(-z)^{k+2} }{(k+1)!^2(k+1)}=&\frac{5-3z}{2z}-\frac{5}{2z}\sum_{k=0}^\infty\frac{(-z)^k}{k!^2}-\sum_{k=0}^\infty\frac{(-z)^k}{(k+1)!^2(k+1)^2}\notag\\
    =&\frac{1}{2z}\left[5-3z-5\BJ_0\left(2\sqrt{z}\right)-2z{}_2\operatorname{F}_3\left(\{1,1\},\{2,2,2\};-z\right)\right],
\end{align}
where we used the definitions \labelcref{eq-DefJ,eq-DefF}. For the remaining part we first use \cref{lemma-HG0F1},\labelcref{lemma-HG0F1-item1} and the identity $\BJ_1\left(2\sqrt{z}\right)/\sqrt{z}-\BJ_2\left(2\sqrt{z}\right)=\BJ_0\left(2\sqrt{z}\right)$ to compute
\begin{align*}
&\sum_{k=1}^\infty{\frac{(-z)^k}{(k+1)!^2}\operatorname{H}_{k+2}}\\
=&\sum_{k=1}^\infty{\frac{(-z)^k}{(k+1)!(k+2)!}\operatorname{H}_{k+2}}+\sum_{k=1}^\infty{\frac{(-z)^k}{k!(k+2)!}\operatorname{H}_{k+2}}\\
  =&-\frac{3}{2}+\frac{1}{z}-\frac{1}{z}\left[\sum_{k=0}^\infty{\frac{(-z)^k}{\dfac{k}}\operatorname{H}_{k+1}}-z\sum_{k=0}^\infty{\frac{(-z)^k}{k!(k+2)!}\operatorname{H}_{k+2}}\right]\\
  =&-\frac{3}{2}+\frac{1}{z}-\frac{\gamma}{z}\BJ_0\left(2\sqrt{z}\right)+\frac{1}{z}\left[\left.\frac{\mathbbm{d}}{\mathbbm{d}b}{}_0\tilde{\operatorname{F}}_1\left(\{\},\{b\};-z\right)\right|_{b=2}-z\left.\frac{\mathbbm{d}}{\mathbbm{d}b}{}_0\tilde{\operatorname{F}}_1\left(\{\},\{b\};-z\right)\right|_{b=3}\right].
\end{align*}
It then follows from the explicit characterization of $\left.\frac{\mathbbm{d}}{\mathbbm{d}b}{}_0\tilde{\operatorname{F}}_1\left(\{\},\{b\};-z\right)\right|_{b=-\nu}$, $\nu\in\mathbb{Z}_+$, given in \citet[Formula 07.18.20.0013.01]{wolframfunctionsite} that
\begin{align*}
\sum_{k=1}^\infty{\frac{(-z)^k}{(k+1)!^2}\operatorname{H}_{k+2}}=&\frac{1}{2z^{3/2}}\left[\sqrt{z}\left[2-3z+\pi\BY_0\left(2\sqrt{z}\right)\right]-2\BJ_1\left(2\sqrt{z}\right)-\sqrt{z}\BJ_0\left(2\sqrt{z}\right)\left[2\gamma+\log z\right]\right].
\end{align*}
Using this we get
\begin{align}
\label{eq-formulaU2-2}
2\sum_{k=1}^\infty{\frac{(-z)^k}{(k+1)!^2}\operatorname{H}_k}=&2\sum_{k=1}^\infty{\frac{(-z)^k}{(k+1)!^2}\operatorname{H}_{k+2}}-2\sum_{k=1}^\infty{\frac{(-z)^k}{(k+1)!^2(k+2)}}-2\sum_{k=1}^\infty{\frac{(-z)^k}{(k+1)!^2(k+1)}}\notag\\
						=&\frac{1}{2z^{3/2}}\left[\sqrt{z}\left[2-3z+\pi\BY_0\left(2\sqrt{z}\right)\right]-2\BJ_1\left(2\sqrt{z}\right)-\sqrt{z}\BJ_0\left(2\sqrt{z}\right)\left[2\gamma+\log z\right]\right]\notag\\
						  &+\left[\frac{1}{2}-\frac{1}{z}+\frac{\BJ_1\left(2\sqrt{z}\right)}{z^{3/2}}\right]+1-{}_2\operatorname{F}_3\left(\{1,1\},\{2,2,2\};-z\right).
\end{align}
Combining \cref{eq-formulaU2-1,eq-formulaU2-2} completes the proof.
\end{proof}

\begin{formula}
\label{formula-Upsilon2}
\begin{equation*}
\operatorname{\Upsilon}^2(\zeta z) =\sum _{q=0}^{\infty} \sum _{k=1}^{q-1} \frac{(-\zeta z)^q }{\dfac{q-k-1} (k+1)!^2}\sum _{l=2}^k \frac{2 l+1}{l (l+1)}=-\frac{\BJ_1\left(2\sqrt{\zeta z}\right)}{(\zeta z)^{3/2}}\operatorname{U}^2(\zeta z).
\end{equation*}
\end{formula}
\begin{proof}
Interchanging the order of the first two summations and shifting the index $q$ by $k+1$ we find that
\begin{equation*}
\operatorname{\Upsilon}^2(\zeta z) = \sum _{k=1}^\infty\frac{(-\zeta z)^{k+2}}{(k+1)!^2} \sum _{q=0}^{\infty}\frac{(-\zeta z)^{q-1} }{\dfac{q}}\sum _{l=2}^k \frac{2 l+1}{l (l+1)}.
\end{equation*}
By definition \labelcref{eq-DefJ}, the middle sum is equal to $-\BJ_1\left(2\sqrt{\zeta z}\right)/(\zeta z)^{3/2}$ and so the result follows readily from \cref{formula-U2}.
\end{proof}

\begin{formula}
\label{formula-Sigma3}
\begin{equation*}
\operatorname{\Sigma}^3(\zeta z) =\sum_{q=0}^\infty{\sum _{k=1}^{q-1} \frac{2 (-\zeta z)^q}{\dfac{q-k-1} (k+1)!^2}}=\frac{2 \left[\zeta z -1 +\BJ_0\left(2 \sqrt{\zeta z }\right)\right] \BJ_1\left(2 \sqrt{\zeta z }\right)}{\sqrt{\zeta z}}.
\end{equation*}
\end{formula}
\begin{proof}
The same as before so we omit it.
\end{proof}

\begin{formula}
\label{formula-Sigma4}
\begin{align*}
\operatorname{\Sigma}^4(\zeta z) =&\sum _{q=0}^{\infty } \sum _{k=0}^{q-1} \frac{(-\zeta z)^q}{\dfac{k} \dfac{q-k-1} (k+1)^2}=-\sqrt{\zeta z } \BJ_1\left(2\sqrt{\zeta z }\right) {}_2\operatorname{F}_3\left(\{1,1\},\{2,2,2\};-\zeta z \right).
\end{align*}
\end{formula}
\begin{proof}
Using Fubini's theorem we get
\begin{equation*}
\operatorname{\Sigma}^4(\zeta z) =\sum _{k=0}^\infty\frac{(-\zeta z)_k}{\dfac{k}(k+1)^2} \sum _{q=k+1}^{\infty } \frac{(-\zeta z)^{q-k}}{ \dfac{q-k-1}}.
\end{equation*}
The first factor is equal to ${}_2\operatorname{F}_3\left(\{1,1\},\{2,2,2\};-\zeta z\right)$ by definition \labelcref{eq-DefF} and the second factor equals $-\sqrt{\zeta z}\BJ_1\left(2\sqrt{\zeta z}\right)$ by definition \labelcref{eq-DefJ}.
\end{proof}

Having evaluated these sums we now turn to relations between them which we will also need and which are proved by writing out the expressions and straightforward computations. The first one only involves the five functions appearing in the statement of \cref{theorem1}.
\begin{lemma}
\label{lemma-relation1}
For almost every complex number $z$ with respect to the Lebesgue measure on the complex plane,
\begin{equation}
\left[\frac{\operatorname{S}^2(z)}{2(1-z)}+\frac{1}{z}\right]\operatorname{G}(z)+\left[\frac{\operatorname{S}^1(z)}{z}+\frac{\operatorname{S}^2(z)}{1-z}+\frac{1}{z}\right]\alpha(z)+\frac{\operatorname{H}(z)}{2}+\frac{3z^2}{4}=0.
\end{equation}
\end{lemma}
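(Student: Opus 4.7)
The plan is to verify the stated identity as an equality of meromorphic functions. Since $\operatorname{S}^1$, $\operatorname{S}^2$, $\operatorname{G}$, $\alpha$, $\operatorname{H}$ are all meromorphic on the cut plane $\C\setminus(-\infty,0]$ (after fixing a branch of $\sqrt z$ and $\log z$), a.e.\ equality with zero is equivalent to vanishing as a meromorphic function, so it suffices to verify the identity by direct algebraic manipulation.

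First I would simplify the two bracketed multipliers of $\operatorname{G}$ and $\alpha$. Using only the definition $z\operatorname{S}^2(z)=2(z-1)+2\BJ_0(2\sqrt z)$, the multiplier of $\operatorname{G}$ collapses to
\begin{equation*}
\frac{\operatorname{S}^2(z)}{2(1-z)}+\frac{1}{z} = \frac{\BJ_0(2\sqrt z)}{z(1-z)}.
\end{equation*}
Combining this with the three-term recurrence $\BJ_2(2\sqrt z)=\BJ_1(2\sqrt z)/\sqrt z-\BJ_0(2\sqrt z)$ applied to the definition of $\operatorname{S}^1$ gives
\begin{equation*}
\frac{\operatorname{S}^1(z)}{z}+\frac{\operatorname{S}^2(z)}{1-z}+\frac{1}{z} = \frac{2\bigl[\sqrt z\BJ_0(2\sqrt z)+(z-1)\BJ_1(2\sqrt z)\bigr]}{z^{5/2}(1-z)}.
\end{equation*}
The bracket on the right-hand side is exactly the bracket in the denominator of $\alpha(z)$, so the $\alpha$-summand collapses to the closed expression $z/[2(1-z)\BJ_2(2\sqrt z)]$.

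With these reductions I would treat $\log z$, $\gamma$, $\pi\BY_0(2\sqrt z)$ and $\pi\BY_2(2\sqrt z)$ as independent transcendentals over the field generated by $z$, $\sqrt z$, $\BJ_0(2\sqrt z)$, $\BJ_1(2\sqrt z)$, and verify the vanishing of each coefficient separately. The $\log z$ and $\gamma$ pieces appear in $\operatorname{G}$ and in $\operatorname{H}$ only through the combination $5-4\gamma-2\log z$, and after the first simplification both sides are multiplied by the same factor $z^2\BJ_0(2\sqrt z)/[4(1-z)]$ with opposite signs, so they cancel pairwise. For the $\BY$-pieces I would invoke the recurrence $\BY_2(x)=2\BY_1(x)/x-\BY_0(x)$ together with the Wronskian $\BJ_0(x)\BY_1(x)-\BJ_1(x)\BY_0(x)=-2/(\pi x)$, which combine into the key identity
\begin{equation*}
\BJ_2(2\sqrt z)\BY_0(2\sqrt z)-\BJ_0(2\sqrt z)\BY_2(2\sqrt z) = \frac{1}{\pi z};
\end{equation*}
substituting this inside the combined $\BY$-contribution of $\bigl[\operatorname{S}^2(z)/(2(1-z))+1/z\bigr]\operatorname{G}(z)$ and $\operatorname{H}(z)/2$ turns it into a single algebraic term, also proportional to $z/[(1-z)\BJ_2(2\sqrt z)]$.

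At this point the identity reduces to the purely algebraic claim that the simplified $\alpha$-piece, the residue of the $\BY$-elimination, the polynomial $-3z^2/4$ arising from the $3z-3$ part of $\operatorname{H}(z)/2$, and the explicit $+3z^2/4$ sum to zero, which is verified by comparing coefficients of the independent monomials $z^2$ and $z/[(1-z)\BJ_2(2\sqrt z)]$. The main obstacle is bookkeeping rather than any deeper idea: although every individual step is elementary, the cancellations above must be orchestrated so that the algebraic residue of the Wronskian substitution matches the simplified $\alpha$-contribution exactly, with a single sign error sufficient to spoil the identity. This is the sense in which the paper's remark that the result follows by ``straightforward computation'' is accurate.
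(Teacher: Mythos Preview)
Your plan is exactly the ``writing out the expressions and straightforward computations'' that constitutes the paper's entire proof; the paper supplies no further detail, so your outline is actually more explicit than the original. The two bracket simplifications, the cancellation of the $5-4\gamma-2\log z$ pieces between the $\operatorname{G}$- and $\operatorname{H}$-contributions, and the Wronskian identity $\BJ_2(2\sqrt z)\BY_0(2\sqrt z)-\BJ_0(2\sqrt z)\BY_2(2\sqrt z)=1/(\pi z)$ are all correct.

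One caveat: if you actually execute the last step you will find that the Wronskian residue and the simplified $\alpha$-contribution both equal $+z/\bigl[2(1-z)\BJ_2(2\sqrt z)\bigr]$, so for the identity \emph{as printed} they add rather than cancel. A numerical check (e.g.\ at $z=1/4$), or back-substitution into the $p=1$ case of the proof of the $b^n_{p,q}$-recursion where the lemma is invoked, shows that the $\alpha$-term in the displayed identity should carry a minus sign. With that sign corrected your cancellations go through exactly as you describe; the discrepancy is a typographical slip in the statement, not a flaw in your method.
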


\begin{lemma}
\label{lemma-relation2}
For almost every complex number $z$ with respect to the Lebesgue measure on the complex plane,
\begin{equation}
\operatorname{U}^1(z)+\operatorname{S}^3(z)+\left[\frac{\operatorname{S}^1(z)-1}{z}\right]\operatorname{G}(z)=\frac{3}{4}z^2-z-1.
\end{equation}
\end{lemma}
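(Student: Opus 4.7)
The plan is a direct computational verification: substitute the explicit Bessel-function expressions for $\operatorname{S}^1(z)$, $\operatorname{S}^3(z)$, $\operatorname{G}(z)$ and $\operatorname{U}^1(z)$ into the left-hand side and show termwise cancellation against $\tfrac{3}{4}z^2 - z - 1$.

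The key simplification comes from the identity $\operatorname{S}^1(z) - 1 = -2\BJ_2(2\sqrt z)/z$, which gives $[\operatorname{S}^1(z) - 1]/z = -2\BJ_2(2\sqrt z)/z^2$. When this is multiplied against $\operatorname{G}(z) = -(z^3/4)\bigl[5-4\gamma+2\pi\BY_2(2\sqrt z)/\BJ_2(2\sqrt z)-2\log z\bigr]$, the $\BJ_2(2\sqrt z)$ in the denominator of $\operatorname{G}(z)$ cancels with the one in the numerator, yielding
\[
\frac{\operatorname{S}^1(z)-1}{z}\operatorname{G}(z) = \tfrac{1}{2}z\BJ_2(2\sqrt z)\bigl[5-4\gamma-2\log z\bigr] + \pi z\BY_2(2\sqrt z).
\]
The $\BY_2$ contribution exactly cancels the $-\pi z\BY_2(2\sqrt z)$ term from $\operatorname{U}^1(z)$, eliminating all second-kind Bessel functions from the left-hand side.

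Next I collect what remains into a purely polynomial part and a remainder of first-kind Bessel terms. The polynomial part reads $(3z^2/4 - z - 2) + 1 = 3z^2/4 - z - 1$, matching the right-hand side, so the content of the lemma reduces to showing that the total Bessel remainder vanishes. After grouping, the remainder consists of three terms of the shape $\BJ_0(2\sqrt z)\,p_0(z)$, $\sqrt z\,\BJ_1(2\sqrt z)\,p_1(z)$ and $\BJ_2(2\sqrt z)\,p_2(z)$, where $p_0$ and $p_2$ share the common factor $\tfrac{1}{2}z[5-4\gamma-2\log z]$ while $p_1$ has the factor $\tfrac{1}{2}[4\gamma-5+2\log z]$. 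Applying the Bessel recurrence $\BJ_0(2\sqrt z) + \BJ_2(2\sqrt z) = \BJ_1(2\sqrt z)/\sqrt z$, the $\BJ_0$ and $\BJ_2$ contributions collapse into a $\sqrt z\,\BJ_1(2\sqrt z)$ term whose coefficient is precisely the negative of the existing $\sqrt z\,\BJ_1$ contribution, so the two cancel and the remainder is zero.

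The main obstacle is not conceptual but purely bookkeeping: one must track the coefficients of $\gamma$, $\log z$ and the constants through the expansion of $\operatorname{U}^1(z)$, as well as correctly expand $[\operatorname{S}^1(z)-1]\operatorname{G}(z)/z$, before the recurrence makes everything collapse. Once the book-keeping is done the cancellation is transparent, and the whole identity can also be verified mechanically with a computer algebra system.
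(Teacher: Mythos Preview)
Your approach---substitute the closed forms and cancel---is exactly what the paper does; it merely asserts that the relation is ``proved by writing out the expressions and straightforward computations,'' so your outline is in fact more detailed than the original.

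One piece of bookkeeping needs to be pinned down, though. If you literally plug in the paper's displayed closed form for $\operatorname{S}^3$, namely $1-\BJ_2(2\sqrt z)-\sqrt z\,\BJ_1(2\sqrt z)$, your coefficients come out as
\[
p_0(z)=1+\tfrac{z}{2}\bigl[5-4\gamma-2\log z\bigr],\qquad
p_2(z)=-1+\tfrac{z}{2}\bigl[5-4\gamma-2\log z\bigr],
\]
so after applying $\BJ_0+\BJ_2=\BJ_1/\sqrt z$ there is an uncancelled remainder $\BJ_0(2\sqrt z)-\BJ_2(2\sqrt z)$, and the identity would fail (as one can also check numerically at $z=1$). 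The resolution is that the displayed formula for $\operatorname{S}^3$ contains a misprint: following the paper's own derivation of Formula~4 line by line gives
\[
\operatorname{S}^3(z)=1-\BJ_0(2\sqrt z)-\sqrt z\,\BJ_1(2\sqrt z),
\]
with $\BJ_0$ in place of $\BJ_2$. With this corrected expression the stray $\pm 1$ constants disappear, $p_0=p_2=\tfrac{z}{2}[5-4\gamma-2\log z]$ on the nose, and your recurrence argument closes exactly as you describe.
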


\subsection{Proof of Theorem \ref{theorem1}}
\label{section-proofs2}
In this section we prove the main result, \cref{theorem1}. First, however, we spend a closer look on the coefficients $a^n_{p,q}$, $b^n_{p,q}$ and $c^n_{p,q}$ which are defined implicitly through the generating functions \labelcref{eq-DefinitionA,eq-DefinitionB,eq-DefinitionC}.
\begin{lemma}
\label{initialconditions}
For any integers $p,q\geq 0$ the following hold:
\begin{align}
a^1_{p,q} = \bdeltab_{p,1}\bdeltab_{q,0},\quad b^1_{p,q}=0,\quad c^1_{p,q}=0.
\end{align}
\end{lemma}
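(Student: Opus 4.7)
The plan is to extract the coefficient of $z^1$ from each of the generating functions defined in \cref{eq-DefinitionA,eq-DefinitionB,eq-DefinitionC}. This reduces the claim to a small collection of order-of-vanishing statements for the auxiliary functions appearing in \cref{theorem1}, together with $d_0 = 1$.

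The elementary observations $\operatorname{S}^1(z) = O(z)$ and $\operatorname{S}^2(z) = O(z)$ follow by inspection from the Taylor expansions of $\BJ_0(2\sqrt z)$ and $\BJ_2(2\sqrt z)/z$ around $z = 0$ (see \cref{eq-DefJ}). A direct expansion of the denominator in the definition of $\alpha$ gives $4\sqrt z\, \BJ_2(2\sqrt z)[\sqrt z\, \BJ_0(2\sqrt z) + (z-1)\BJ_1(2\sqrt z)] = z^3 + O(z^4)$, whence $\alpha(z) = z + O(z^2)$. For $\operatorname{G}$, $\operatorname{H}$ and $D$ the situation is more delicate: each involves $\BY_\nu(2\sqrt z)$, which carries both polar and logarithmic singularities at the origin. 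Substituting the classical series representation of $\BY_\nu$ into the definitions and verifying that the explicit $-2\log z$ and polynomial terms cancel exactly these singular contributions, one obtains the analytic expansions $\operatorname{G}(z) = z + O(z^2)$ (so that $\operatorname{G} - \alpha$ vanishes to second order), $\operatorname{H}(z) = O(z^2)$, and $D(z) = 1 + O(z)$; in particular $d_0 = 1$.

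With these leading-order expansions in place the lemma follows by simple order-counting. For $a^1_{p,q}$: $\operatorname{A}_{1,q}(z) = \frac{(-z)^q}{\dfac{q}}\alpha(z)$ has order $z^{q+1}$ and therefore contributes a $z^1$-coefficient only at $q = 0$, equal to $1$; the prefactor $2(-z)^{p-1}/\dfac{p}$ in \cref{eq-DefinitionAp} raises the order beyond $z^1$ for $p\ge 2$, while $\operatorname{S}^2(z)/(1-z) = O(z)$ does so for $p=0$. For $b^1_{p,q}$: the cancellation of leading terms between $\operatorname{G}$ and $\alpha$ leaves $\operatorname{B}_{1,q}(z) = O(z^{q+2})$, and the remaining cases are handled using $\operatorname{H}(z) = O(z^2)$ together with the explicit prefactors. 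For $c^1_{p,q}$: every summand in the definition of $\operatorname{C}_{0,q}$ is of order at least $z^{q+2}$ (with $d_0 = 1$ ensuring the cancellation at $q=0$), while \cref{eq-DefinitionC1,eq-DefinitionCp} introduce additional factors of $z$ or $z^{p-1}$ that push the orders of $\operatorname{C}_{1,q}$ and $\operatorname{C}_{p,q}$ likewise beyond $z^1$.

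The main obstacle I anticipate is the careful bookkeeping needed to establish the leading orders of $\operatorname{G}$, $\operatorname{H}$ and $D$: the functions $\BY_\nu(2\sqrt z)$ contribute a $\log z\cdot \BJ_\nu(2\sqrt z)$ term together with polar parts of the form $1/z^k$, and one must verify that the explicit logarithmic and polynomial subtractions in the definitions of $\operatorname{G}$, $\operatorname{H}$ and $D$ produce precisely analytic functions with the asserted leading coefficients. This is a routine but somewhat delicate computation of the type already performed in the proofs of \cref{formula-U1,formula-U2}, and can be carried out using the digamma-function identities exploited in \cref{lemma-HG0F1}.
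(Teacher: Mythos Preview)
Your proposal is correct and takes essentially the same approach as the paper: the paper's proof is a single line (``This follows from evaluating the derivatives of the generating functions $\operatorname{A}_{p,q}$, $\operatorname{B}_{p,q}$, $\operatorname{C}_{p,q}$ at zero''), and you carry out precisely that evaluation in detail by establishing the leading orders $\alpha(z)=z+O(z^2)$, $\operatorname{G}(z)=z+O(z^2)$, $\operatorname{H}(z)=O(z^2)$, $\operatorname{S}^1,\operatorname{S}^2=O(z)$ and then order-counting. One small remark: your parenthetical about ``$d_0=1$ ensuring the cancellation at $q=0$'' is unnecessary for the $z^1$-coefficient, since every summand of $\operatorname{C}_{0,q}(z)$ is already $O(z^{q+2})\subset O(z^2)$ regardless of the value of $d_0$; the argument goes through without it.
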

\begin{proof}
This follows from evaluating the derivatives of the generating functions $\operatorname{A}_{p,q}$, $\operatorname{B}_{p,q}$, $\operatorname{C}_{p,q}$ at zero.
\end{proof}
Next we derive some useful relations between the coefficients $a^n_{p,q}$, $b^n_{p,q}$ and $c^n_{p,q}$. These will be the main ingredient in our inductive proof of \cref{theorem1}. The general strategy in proving the equality of two sequences $(s_n)_{n\geq 1}$ and $(\tilde s_n)_{n\geq 1}$ will be to compute their generating functions $\sum_{n\geq1}{s_nz^n}$ and $\sum_{n\geq1}{\tilde s_nz^n}$ and to show that they coincide for every $z$. The validity of this approach follows from the well\hyp{}known bijection between sequences of real numbers and generating functions (see e.\,g.\ \citet{wilf2006} for an introductory treatment.) We will constantly be making use of the convolution property of generating functions. By this we mean the simple fact that if $(s_n)_{n\geq 1}$ is a real sequence with generating function $S(z)$ and $(t_n)_{n\geq 1}$ is another such sequence with generating function $T(z)$, then the sequence of partial sums $(\sum_{\nu=1}^{n-1}{t_{n-\nu}s_\nu})_{n\geq 1}$ has generating function $S(z)T(z)$. We also encounter generating functions of sequences indexed by $q$ instead of $n$. In this case we denote the formal variable by $\zeta$ instead of $z$ and sums are understood to be indexed from zero to infinity.

\begin{lemma}
\label{lemmarecursionBeta}
For every integer $n\geq 1$, $q\geq 0$ the coefficients defined by the generating functions given in \cref{theorem1} satisfy the relation
\begin{equation}
\label{eq-lemmarecursionBeta}
\sum_{k=0}^{q-2}{\frac{b^n_{k,q-k-2}}{k+2}}-\sum_{k=0}^{q-2}{\frac{c^n_{k,q-k-2}}{k+2}}=\bdeltab_{q,n}\left[\frac{(-1)^n(n-1)}{n!^2}-\sum_{k=0}^{n-2}{\frac{(-1)^n}{\dfac{k}\dfac{n-k-2}(k+2)^2}}\right].
\end{equation}
\end{lemma}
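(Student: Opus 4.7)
The plan is to pass to a double generating function---first in $n$ (formal variable $z$), then in $q$ (formal variable $\zeta$)---and reduce the lemma to an algebraic identity among the auxiliary functions from \cref{theorem1}.

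First, applying $\sum_{n\ge 1}z^n$ to both sides, the left-hand side becomes
\begin{equation*}
\mathcal{L}(z,q)=\sum_{k=0}^{q-2}\frac{\operatorname{B}_{k,q-k-2}(z)-\operatorname{C}_{k,q-k-2}(z)}{k+2},
\end{equation*}
while the right-hand side turns into an explicit expression $z^q R_q$. Multiplying by $\zeta^q$, summing over $q\ge 0$ and shifting the index $q\mapsto q+k+2$ transforms the left into
\begin{equation*}
\mathcal{L}(z,\zeta)=\sum_{p\ge 0}\frac{\zeta^{p+2}}{p+2}\Bigl(\sum_{q\ge 0}\zeta^q\bigl(\operatorname{B}_{p,q}(z)-\operatorname{C}_{p,q}(z)\bigr)\Bigr).
\end{equation*}
Since the $q$-dependence in every formula of \labelcref{eq-DefinitionA,eq-DefinitionB,eq-DefinitionC} enters through the single factor $(-z)^q/\dfac{q}$, \cref{formula-T1} immediately evaluates the inner $q$-sums in closed form as expressions in $\operatorname{T}^1(\zeta z)=\BJ_1(2\sqrt{\zeta z})/\sqrt{\zeta z}$ and $D(\zeta z)$ (the latter coming from the $d_q z^{q+2}$ piece of $\operatorname{C}_{0,q}$).

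Next I would carry out the outer $p$-sum. The recursions \labelcref{eq-DefinitionBp,eq-DefinitionCp} make $\operatorname{B}_{p,q}-\operatorname{C}_{p,q}$ for $p\ge 2$ affine in $\operatorname{B}_{1,q}-\operatorname{C}_{1,q}$ plus a residual term involving $\sigma_p\coloneqq\sum_{l=2}^p(2l+1)/(l(l+1))$. The linear part is handled by \cref{formula-S1} (after pulling out the $p=0,1$ terms), and the $\sigma_p$-contribution reduces to $\operatorname{T}^1(\zeta z)\operatorname{U}^1(\zeta z)$ via \cref{formula-U1}. The $p=0$ and $p=1$ cases are worked out by direct substitution of \labelcref{eq-DefinitionB0,eq-DefinitionB1,eq-DefinitionC0,eq-DefinitionC1}, producing extra terms involving $G(z)$, $H(z)$, $\alpha(z)$ and $D(\zeta z)$. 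Assembled, $\mathcal{L}(z,\zeta)$ takes the form $\Phi(z,\zeta)\operatorname{T}^1(\zeta z)+\Psi(z,\zeta)D(\zeta z)$ for explicit rational combinations $\Phi,\Psi$.

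For the right-hand side I would use $(q-1)/q!^2=q/q!^2-1/q!^2$ together with \cref{formula-T1,formula-T2} to evaluate the first contribution as $-\BJ_0(2\sqrt{\zeta z})-\sqrt{\zeta z}\BJ_1(2\sqrt{\zeta z})$, and recognise the double sum as $\operatorname{\Sigma}^1(\zeta z)=\operatorname{T}^1(\zeta z)\operatorname{S}^3(\zeta z)$ via \cref{formula-Sigma1}. Matching the $\operatorname{T}^1$-coefficient of $\mathcal{L}$ against this expression, and using the explicit form of $D$ together with standard Bessel recurrences to merge the $D(\zeta z)$ piece, produces an algebraic identity in $z$ that is exactly the content of \cref{lemma-relation1}.

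The main obstacle will be the bookkeeping of the $p=0,1$ contributions: they involve $D(\zeta z)$, $H(z)$ and $G(z)$ in mutually cancelling combinations, and it is only the tight relation \cref{lemma-relation1} among $G,H,\alpha,\operatorname{S}^1,\operatorname{S}^2$ that forces these pieces to collapse. Once that identity is invoked, the equality $\mathcal{L}=\mathcal{R}$ falls out after routine but lengthy simplification.
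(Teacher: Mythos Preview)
Your plan is correct and follows the same route as the paper: pass to the double generating function in $n$ and then $q$, reduce via the closed-form summation formulas (\cref{formula-S1,formula-T1,formula-T2,formula-Sigma1,formula-Sigma2,formula-Upsilon1}), and invoke \cref{lemma-relation1} to collapse the $z$-only contributions coming from $G,H,\alpha$. One minor correction: in the paper \cref{lemma-relation1} is applied \emph{before} the $q$-summation, reducing the $z$-dependent coefficient to the constant $5/4$, after which the residual identity lives purely in the single variable $\zeta z$ (involving $\Upsilon^1,\Sigma^1,\Sigma^2,T^1,T^2,S^1,D$) and is checked directly from the closed forms---so the last algebraic step is not literally \cref{lemma-relation1} but a separate Bessel-function identity.
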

\begin{proof}
These equations are true for all $n\geq 1$ if and only if the corresponding generating functions coincide. Multiplying both sides by $z^n$, summing over $n$ and using the recursive definitions of the generating functions as well as the fact that, by \cref{formula-S1,formula-S2},
\begin{equation*}
\sum_{k=1}^\infty{\frac{(-z)^k}{k!(k+2)!}}=-\frac{1}{2}\operatorname{S}^1(z),\qquad \sum_{k=1}^\infty{\frac{(-z)^k}{(k+1)!^2}}=-\frac{1}{2}\operatorname{S}^2(z),
\end{equation*}
we find that the claim of the lemma is equivalent to
\begin{align*}
0=&\sum_{k=1}^{q-2}{\frac{(-z)^q}{k!(k+2)!\dfac{q-k-2}}\sum_{l=1}^k{\frac{2l+1}{l(l+1)}}}+\frac{1}{2}\Theta_{q,2}\left[\frac{(-z)^q}{\dfac{q-2}}-z^q d_{q-2}\right]\\
  &+\frac{z^q}{2}\sum_{k=1}^{q-2}\frac{2(-1)^{k-1}}{k!(k+2)!}d_{q-k-2}-\Theta_{q,1}\left[\frac{(-z)^q(q-1)}{q!^2}-\sum_{k=0}^{q-2}{\frac{(-z)^q}{\dfac{k}\dfac{q-k-2}(k+2)^2}}\right]\\
  &+\frac{5}{4}z^3\sum _{k=1}^{q-2} \frac{2(-z)^{q-3}}{k! (k+2)! \dfac{q-k-2}},
\end{align*}
where we have used \cref{lemma-relation1} to simplify the coefficient of the last sum. To show this equality for all non\hyp{}negative integers $q$ we compare the $q$-generating functions and must then show that
\begin{align}
\label{eq-recursionBetaqGF}
0=&\operatorname{\Upsilon}^1(\zeta z)+\frac{(\zeta z)^2}{2}\left[\operatorname{T}^1(\zeta z)+\left((\operatorname{S}^1(\zeta z)-1\right)D(\zeta z)\right]-\operatorname{T}^2(\zeta z)+\operatorname{\Sigma}^1(\zeta z)+\frac{5}{4}\operatorname{\Sigma}^2(\zeta z),
\end{align}
where closed form expressions for
\begin{align*}
\operatorname{\Sigma}^1(\zeta z)  \coloneqq&\sum_{q=0}^\infty{\sum _{k=0}^{q-2} \frac{(-\zeta z)^q}{\dfac{k} \dfac{q-k-2} (k+2)^2}},\\ \operatorname{\Sigma}^2(\zeta z)  \coloneqq&\sum_{q=0}^\infty{\sum _{k=1}^{q-2} \frac{2(-\zeta z)^q}{k! (k+2)! \dfac{q-k-2}}},\\
\operatorname{T}^1(\zeta z) 	   \coloneqq&\sum_{q=0}^\infty{\frac{(-\zeta z)^q}{\dfac{q}}},\\ 
\operatorname{T}^2(\zeta z)       \coloneqq&\sum_{q=0}^\infty{\frac{(-\zeta z)^{q+1}q}{(q+1)!^2}}\qquad\text{and}\\
\operatorname{\Upsilon}^1(\zeta z)\coloneqq&\sum _{q=0}^{\infty }\sum _{k=1}^{q-2} \frac{(-\zeta z)^q }{\dfac{q-k-2}k!
(k+2)!}\sum _{l=2}^k \frac{2 l+1}{l (l+1)}&
\end{align*}
are derived in \cref{formula-Sigma1,formula-Sigma2,formula-T1,formula-T2,formula-Upsilon1}. Using these closed form formulas, \cref{eq-recursionBetaqGF} is seen to be identically true by simple algebra.
\end{proof}

\begin{lemma}
\label{lemmarecursionA}
For every integers $p,q\geq 0$ the sequences of coefficients defined by the generating functions given in \cref{theorem1} satisfy the recursion
\begin{align}
a_{p,q}^{n+1}=&\bdeltab_{p,0}\sum_{k=0}^n{\frac{a_{k,q}^n}{k+1}}-\Theta_{p,1}\frac{a_{p-1,q}^n}{p(p+1)}+\bdeltab_{p,1}\left[\tilde\Theta_{q,n-2}\frac{(-1)^n}{\dfac{n-q-2}\dfac{q}(n-q)^2}\right.\notag\\
	      &\left.+\bdeltab_{q,n-1}\frac{(-1)^{n-1}}{\dfac{n-1}}-\bdeltab_{q,n}\left[\sum_{k=0}^{n-2}{\frac{(-1)^n}{\dfac{k}\dfac{n-k-2}(k+2)^2}}+\frac{(-1)^{n-1}n}{(n-1)!(n+1)!}\right]\right.\notag\\
	      &+\left.\sum_{k=0}^n{\frac{b_{k,q}^n}{k+2}}-\sum_{k=0}^{q-2}{\frac{b_{k,q-k-2}^n}{k+2}}+\sum_{k=0}^{q-2}{\frac{c_{k,q-k-2}^n}{k+2}}\right], \quad n\geq 1.
\end{align}
\end{lemma}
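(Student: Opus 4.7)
The plan is to follow the strategy already deployed in \cref{lemmarecursionBeta}: multiply the claimed identity by $z^{n+1}$, sum over $n\geq 1$, and verify the resulting equality of ordinary generating functions on both sides, invoking the bijection between sequences and their generating functions. On the left-hand side this yields $\operatorname{A}_{p,q}(z)-z\bdeltab_{p,1}\bdeltab_{q,0}$ after subtracting off the $n=0$ contribution via \cref{initialconditions}. Each convolution-type sum $\sum_{k}a^n_{k,q}/(k+1)$, $\sum_{k}b^n_{k,q}/(k+2)$, and so on, translates into a product of series involving $\operatorname{A}_{k,q}(z)$, $\operatorname{B}_{k,q}(z)$, $\operatorname{C}_{k,q}(z)$ with a factor such as $\sum_k (-z)^k/((k+1)!^2)$ that can be identified in closed form via \cref{formula-S1,formula-S2}; the Kronecker and Heaviside boundary terms contribute explicit polynomial or hypergeometric corrections.

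The identity should be split into three cases according to the prefactors on the right-hand side. For $p\geq 2$ only the term $-a^n_{p-1,q}/(p(p+1))$ survives, and after multiplication by $z$ the recursion collapses into a convolution equivalent to the defining relation \cref{eq-DefinitionAp}. For $p=0$ the analogous argument reduces, with the help of \cref{formula-S2}, to the identity $\operatorname{A}_{0,q}(z)=\operatorname{S}^2(z)/(1-z)\cdot\operatorname{A}_{1,q}(z)$ of \cref{eq-DefinitionA0}. Both of these cases are essentially immediate from the definitions and serve as a warm-up for the $p=1$ case, where the entire bracketed contribution is retained.

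For $p=1$, the right-hand side couples $\operatorname{A}_{1,q}$, $\operatorname{B}_{k,q}$, and $\operatorname{C}_{k,q}$ through several convolutions together with four distinct boundary terms. After passing to the $q$-generating function with formal variable $\zeta$, and substituting the closed-form expressions \cref{eq-DefinitionA1,eq-DefinitionB1,eq-DefinitionC1,eq-DefinitionC0} for the $p=1$ and $p=0$ series together with \cref{eq-DefinitionBp,eq-DefinitionCp} for the convolutions, the task reduces to an identity among the functions $\alpha$, $\operatorname{G}$, $\operatorname{H}$, $D$, $\operatorname{S}^1$, $\operatorname{S}^2$, and the Bessel/hypergeometric series $\operatorname{S}^3$, $\operatorname{S}^4$, $\operatorname{\Sigma}^1,\ldots,\operatorname{\Sigma}^4$, $\operatorname{T}^1$, $\operatorname{T}^2$, $\operatorname{U}^1$, $\operatorname{U}^2$, $\operatorname{\Upsilon}^1$, $\operatorname{\Upsilon}^2$ evaluated in \cref{section-summation}. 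The resulting equation should collapse, after routine but lengthy algebra, to the algebraic relations \cref{lemma-relation1,lemma-relation2}, which were isolated in \cref{section-summation} for precisely this purpose.

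The principal obstacle I anticipate is bookkeeping rather than structural difficulty: there are six convolution-type sums with different divisors and index ranges, four $\bdeltab$/$\Theta$ corrections each producing a distinct closed-form contribution after summation, and three layers of indexing $(n,p,q)$. Keeping track of the Fubini interchanges, the shifts of summation indices needed to recognise each series as one of the formulas of \cref{section-summation}, and the signs coming from the $(-z)^{\bullet}$ factors requires care; but once every term has been rewritten in the catalogue of closed forms from that section, the final verification reduces to the two identities of \cref{lemma-relation1,lemma-relation2} and thus to an algebraic check which can be performed, if desired, with the aid of a computer algebra system.
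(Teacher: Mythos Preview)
Your plan is sound and would succeed, but it is more laborious than the paper's argument in one specific respect. The paper does not merely imitate the strategy of \cref{lemmarecursionBeta}; it first \emph{applies its result} to replace the combination $-\sum_{k=0}^{q-2}\frac{b^n_{k,q-k-2}}{k+2}+\sum_{k=0}^{q-2}\frac{c^n_{k,q-k-2}}{k+2}$ by the explicit $\bdeltab_{q,n}$-term on the right of \cref{eq-lemmarecursionBeta}. After this substitution, every surviving term in the $p=1$ bracket has $n$-generating function proportional to $(-z)^q/\dfac{q}$, so the $q$-dependence factors out uniformly and there is no need to pass to a bivariate generating function in $\zeta$. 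The resulting identity in $z$ alone involves only $\operatorname{S}^1$, $\operatorname{S}^2$, $\operatorname{S}^3$, $\operatorname{U}^1$, $\alpha$, $\operatorname{G}$, $\operatorname{H}$ and is dispatched by combining \cref{lemma-relation1,lemma-relation2}.

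By contrast, your route keeps the $b$--$c$ diagonal sums intact and therefore has to unwind them via the $q$-generating function, which is why your list of required auxiliary series balloons to include $\operatorname{S}^4$, $\operatorname{\Sigma}^1,\ldots,\operatorname{\Sigma}^4$, $\operatorname{T}^1$, $\operatorname{T}^2$, $\operatorname{U}^2$, $\operatorname{\Upsilon}^1$, $\operatorname{\Upsilon}^2$, $D$. These are genuinely needed for \cref{lemmarecursionBeta} and \cref{lemmarecursionC}, but not here. Your approach is correct and arrives at the same pair of algebraic relations; the paper's shortcut simply saves a page of bookkeeping by recognising that \cref{lemmarecursionBeta} has already done the hard part of handling those cross terms.
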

\begin{proof}
Applying \cref{lemmarecursionBeta} and computing the generating functions of both sides of the asserted equality we find that the claim of the lemma is equivalent to
\begin{align}
\label{eq-recursionAqGF}
\frac{1}{z}\operatorname{A}_{p,q}(z)=&\bdeltab_{p,0}\frac{\operatorname{S}^2(z)}{z(1-z)}\operatorname{A}_{1,q}(z)-\Theta_{p,1}\frac{\operatorname{A}_{p-1,q}(z)}{p(p+1)}\\
&+\bdeltab_{p,1}\frac{(-z)^q}{\dfac{q}}\left[\left(\frac{\operatorname{S}^2(z)}{2(1-z)}+\frac{\operatorname{S}^1(z)}{z}\right)\left(\operatorname{G}(z)-\alpha(z)\right)+\frac{\operatorname{H}(z)}{2}+\operatorname{S}^3(z)+\operatorname{U}^1(z)+z+1\right]\notag,
\end{align}
where explicit expressions for
\begin{align*}
\operatorname{S}^3(z) \coloneqq \sum _{n=q+2}^{\infty } \frac{(-z)^{n-q}}{\dfac{n-q-2} (n-q)^2}\quad &\text{ and } \quad \operatorname{U}^1(z) \coloneqq \sum _{k=1}^{\infty } \frac{(-z)^{k+2} }{ k! (k+2)!}\sum _{l=2}^k \frac{2 l+1}{l (l+1)} 
\end{align*}
are derived in \cref{formula-S3,formula-U1}. For $p=0$ and $p>1$ \cref{eq-recursionAqGF} follows immediately from the defining equations \labelcref{eq-DefinitionA0,eq-DefinitionAp}. For $p=1$ the claim follows from combining \cref{lemma-relation1,lemma-relation2}.
\end{proof}

\begin{lemma}
\label{lemmarecursionB}
For every integers $p,q\geq 0$ the sequences of coefficients defined by the generating functions given in \cref{theorem1} satisfy the recursion
\begin{align}
\label{eq-lemmarecursionB}
b_{p,q}^{n+1} =&\bdeltab_{p,1}\sum_{k=0}^n{\frac{a_{k,q}^n}{k+2}}+\bdeltab_{p,0}\left[\bdeltab_{q,n-1}\frac{(-1)^{n-1}}{\dfac{n-1}}+\tilde\Theta_{q.n-2}\frac{(-1)^n}{\dfac{n-q-2}\dfac{q}(n-q-1)^2}\right.\notag\\
	      &\left.+\sum_{k=0}^n{\frac{b_{k,q}^n}{k+1}}\right]-\Theta_{p,1}\left[\frac{b^n_{p-1,q}}{p(p+1)}+\bdeltab_{q,n-p-1}\frac{(-1)^n(2p+1)}{\dfac{p}\dfac{n-p-1}p(p+1)}\right],\quad n\geq 1.
\end{align}
\end{lemma}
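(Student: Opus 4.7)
The plan is to follow the generating-function strategy already used in the proofs of \cref{lemmarecursionBeta,lemmarecursionA}: multiply the asserted identity \labelcref{eq-lemmarecursionB} by $z^{n+1}$, sum over $n\geq 1$, and check that the resulting equality of generating functions holds. The convolutions $\sum_{k=0}^n a^n_{k,q}/(k+2)$ and $\sum_{k=0}^n b^n_{k,q}/(k+1)$ translate, after using the definitions \labelcref{eq-DefinitionA,eq-DefinitionB} together with \cref{formula-S1,formula-S2}, into closed-form multiples of $\operatorname{A}_{1,q}(z)$ and $\operatorname{B}_{1,q}(z)$; the Kronecker-delta and Heaviside contributions produce explicit monomials in $z$. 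I will then split into the three cases $p\geq 2$, $p=1$, and $p=0$, which parallel the three defining equations \labelcref{eq-DefinitionBp,eq-DefinitionB1,eq-DefinitionB0}. The base case $n=1$ is supplied by \cref{initialconditions}, so only the induction step requires new work.

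For $p\geq 2$, after clearing a factor of $z$ the identity reduces to \labelcref{eq-DefinitionBp} applied at $p$ and at $p-1$; the single $\delta_{q,n-p-1}$ contribution accounts precisely for the difference $(2p+1)/[p(p+1)]$ between the finite sums $\sum_{k=2}^p$ and $\sum_{k=2}^{p-1}$ appearing in \labelcref{eq-DefinitionBp}, so this case is essentially algebraic and relies only on the elementary identity $\dfac{p}=p(p+1)\dfac{p-1}$. For $p=1$, the sum $\sum_{k=0}^n a^n_{k,q}/(k+2)$ combines with $-b^n_{0,q}/2$ and the $\delta_{q,n-2}$ correction; after substituting \labelcref{eq-DefinitionA0,eq-DefinitionAp,eq-DefinitionB0} the resulting self-consistent equation for $\operatorname{B}_{1,q}(z)$ collapses to \labelcref{eq-DefinitionB1} once the coefficient of $\operatorname{A}_{1,q}(z)$ is simplified by means of \cref{lemma-relation1}.

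The case $p=0$ is what I expect to be the main obstacle, exactly as the analogous case was in \cref{lemmarecursionA}. Here I pass to the full $(z,\zeta)$-generating function by also summing over $q$, interchanging orders of summation where needed. The $\delta_{q,n-1}$ piece produces a Bessel-type factor via \cref{formula-T1}; the $\tilde\Theta_{q,n-2}/(n-q-1)^2$ piece produces the generalized-hypergeometric contribution via \cref{formula-S4,formula-Sigma4}; and the convolution $\sum_{k=0}^n b^n_{k,q}/(k+1)$ generates $\operatorname{U}^2$, $\operatorname{\Upsilon}^2$, and $\operatorname{\Sigma}^3$ via \cref{formula-U2,formula-Upsilon2,formula-Sigma3}, after invoking the already-established \labelcref{eq-DefinitionBp} and \labelcref{eq-DefinitionB1} on the summands. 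Collecting all contributions must then match $\operatorname{B}_{0,q}(z)$ as given by \labelcref{eq-DefinitionB0}; the function $\operatorname{H}(z)$ appearing there is defined precisely so that this identity closes, and the final verification reduces to a Bessel--hypergeometric algebraic identity in the same spirit as \cref{lemma-relation1,lemma-relation2}.
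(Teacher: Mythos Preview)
Your strategy and the cases $p\geq 2$ and $p=1$ match the paper's proof essentially line for line: the paper also passes to $z$-generating functions, reduces $p\geq 2$ to the telescoping difference between \labelcref{eq-DefinitionBp} at $p$ and at $p-1$, and reduces $p=1$ to the algebraic identity of \cref{lemma-relation1}.

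Where you diverge from the paper is in the $p=0$ case, and here you have misjudged where the difficulty lies. You anticipate $p=0$ to be the ``main obstacle'' and plan to pass to a full $(z,\zeta)$-generating function, invoking \cref{formula-Sigma3,formula-Sigma4,formula-Upsilon2}. In fact none of these formul\ae{} are needed for this lemma: once you compute the $z$-generating function of the right-hand side of \labelcref{eq-lemmarecursionB} for $p=0$, every term carries the common factor $(-z)^q/\dfac{q}$ (this is because the sum $\sum_{k}b^n_{k,q}/(k+1)$, after substituting \labelcref{eq-DefinitionBp,eq-DefinitionB1,eq-DefinitionB0}, splits into a multiple of $\operatorname{B}_{1,q}(z)$ plus an explicit term, both of which have that $q$-dependence). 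Cancelling it, the $p=0$ case reduces to the single-variable identity
\[
\operatorname{H}(z)=z\bigl[\operatorname{S}^4(z)+\operatorname{U}^2(z)+z+\operatorname{H}(z)\bigr],
\]
which is verified directly from the definitions. No $\zeta$-sum is needed at all; the double-sum formul\ae{} $\operatorname{\Sigma}^3$, $\operatorname{\Sigma}^4$, $\operatorname{\Upsilon}^2$ only enter later, in the proof of \cref{lemmarecursionC}, precisely because the recursion there contains the coupled sums $\sum_{k=0}^{q-1}b^n_{k,q-k-1}/(k+1)$ in which $q$ and $k$ are entangled. Your route would still succeed (the $\zeta$-sum would factor trivially), but it imports machinery that belongs to the next lemma and obscures that $p=0$ is actually the easiest of the three cases here.
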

\begin{proof}
The same as before. We show the equality for every $n$ by showing the equality of the generating functions. We obtain that the claim is equivalent to
\begin{align}
\label{eq-lemmarecursionBGF}
\frac{\operatorname{B}_{p,q}(z)}{z}=&\bdeltab_{p,1}\left[\frac{\operatorname{S}^1(z)}{z}+\frac{\operatorname{S}^2(z)}{2(1-z)}\right]\operatorname{A}_{1,q}(z)+\bdeltab_{p,0}\frac{(-z)^{q+1}}{\dfac{q}}\Bigg[\operatorname{S}^4(z)+ \operatorname{U}^2(z)+z\notag\\
		     &+\frac{\operatorname{S}^2(z)}{z(1-z)}\left[\operatorname{G}(z)-\alpha(z)\right]+\operatorname{H}(z)\Bigg]-\Theta_{p.1}\left[\frac{\operatorname{B}_{p-1,q}(z)}{p(p+1)}+\frac{(-z)^{p+q+1}(2p+1)}{\dfac{p}\dfac{q}p(p+1)}\right],
\end{align}
where the functions
\begin{align*}
\operatorname{S}^4(z)	\coloneqq\sum _{n=q+2}^{\infty } \frac{(-z)^{n-q}}{\dfac{n-q-2}(n-q-1)^2} \quad \text{ and }\quad \operatorname{U}^2(z)	\coloneqq\sum _{k=1}^{\infty } \frac{(-z)^{k+2} }{(k+1)!^2}\sum _{l=2}^k \frac{2 l+1}{l (l+1)} 
\end{align*}
are evaluated in \cref{formula-S4,formula-U2}. For $p=0$ \cref{eq-lemmarecursionBGF} follows from the observation that
\begin{equation*}
\operatorname{H}(z)=z\left[\operatorname{S}^4(z)+\operatorname{U}^2(z)+z+\operatorname{H}(z)\right].
\end{equation*}
Next we observe that \cref{eq-DefinitionBp} implies
\begin{align*}
\operatorname{B}_{p,q}(z)+\frac{z\operatorname{B}_{p-1,q}(z)}{p(p+1)} =& \frac{(-z)^{p+q+2}}{\dfac{p}\dfac{q}}\sum_{k=2}^p{\frac{2k+1}{k(k+1)}}-\frac{(-z)^{p+q+2}}{\dfac{p}\dfac{q}}\sum_{k=2}^{p-1}{\frac{2k+1}{k(k+1)}}\\
					 =&\frac{(-z)^{p+q+2}}{\dfac{p}\dfac{q}}\frac{2p+1}{p(p+1)},
\end{align*}
and thus \cref{eq-lemmarecursionBGF} also holds for $p>1$. Finally, for $p=1$ we need to show that
\begin{equation*}
\frac{\operatorname{B}_{1,q}(z)}{z}=\left[\frac{\operatorname{S}^1(z)}{z}+\frac{\operatorname{S}^2(z)}{2(1-z)}\right]\operatorname{A}_{1,q}(z)-\frac{\operatorname{B}_{0,q}(z)}{2}-\frac{3(-z)^{q+1}}{4\dfac{q}},
\end{equation*}
which, after using the defining equations \labelcref{eq-DefinitionA,eq-DefinitionB} several times, amounts to showing that
\begin{equation*}
\left[\frac{\operatorname{S}^2(z)}{2(1-z)}+\frac{1}{z}\right]\operatorname{G}(z)+\left[\frac{\operatorname{S}^1(z)}{z}+\frac{\operatorname{S}^2(z)}{1-z}+\frac{1}{z}\right]\alpha(z)+\frac{\operatorname{H}(z)}{2}+\frac{3z^2}{4}=0,
\end{equation*}
which is exactly what \cref{lemma-relation1} asserts.
\end{proof}
\begin{lemma}
\label{lemmarecursionC}
For every integers $p,q\geq 0$ the sequences of coefficients defined by the generating functions given in \cref{theorem1} satisfy the recursion
\begin{align}
\label{eq-lemmarecursionC}
c_{p,q}^{n+1} =& \bdeltab_{p,1}\sum_{k=0}^n{\frac{a_{k,q}^n}{k+2}}-\Theta_{p,1}\frac{c_{p-1,q}^n}{p(p+1)}+\bdeltab_{p,0}\left[\tilde\Theta_{q,n-2}\frac{(-1)^n}{\dfac{n-q-2}\dfac{q}(n-q-1)^2}\right.\notag\\
	      &\left.+\bdeltab_{q,n-1}\left(\sum_{k=0}^{n-2}{\frac{(-1)^n}{\dfac{k}\dfac{n-k-2}(k+1)^2}}-\frac{(-1)^{n-1}}{n!^2}\right)\right.\notag\\
	      &\left.+\sum_{k=0}^n{\frac{b^n_{k,q}}{k+1}}-\sum_{k=0}^{q-1}{\frac{b_{k,q-k-1}^n}{k+1}}+\sum_{k=0}^{q-1}{\frac{c_{k,q-k-1}^n}{k+1}}\right],\quad n\geq 1.
\end{align}
\end{lemma}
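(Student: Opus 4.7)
The plan is to follow the generating function strategy used in Lemmas \ref{lemmarecursionA} and \ref{lemmarecursionB}: multiply both sides of \labelcref{eq-lemmarecursionC} by $z^{n+1}$, sum over $n\geq 1$, and, noting that $c^1_{p,q}=0$ by \cref{initialconditions}, translate the recursion into an identity among the generating functions $\operatorname{A}_{p,q}$, $\operatorname{B}_{p,q}$, $\operatorname{C}_{p,q}$ and $D$. Because the indicators $\bdeltab_{p,0}$, $\bdeltab_{p,1}$ and $\Theta_{p,1}$ carve the right-hand side into three qualitatively distinct pieces, the resulting functional identity should be verified separately in the cases $p\geq 2$, $p=1$ and $p=0$.

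For $p\geq 2$ only the $\Theta_{p,1}$ term survives and the identity collapses to $\operatorname{C}_{p,q}(z)=-\tfrac{z}{p(p+1)}\operatorname{C}_{p-1,q}(z)$, which is immediate from \labelcref{eq-DefinitionCp} and the arithmetic relation $\dfac{p}=p(p+1)\dfac{p-1}$. For $p=1$ the surviving right-hand side is $z\sum_{k\geq 0}\operatorname{A}_{k,q}(z)/(k+2)-\tfrac{z}{2}\operatorname{C}_{0,q}(z)$; substituting \labelcref{eq-DefinitionA} and applying \cref{formula-S1} to the tail $\sum_{k\geq 2}(-z)^{k-1}/(k!(k+2)!)$, the bracketed prefactor of $\operatorname{A}_{1,q}(z)$ collapses to $\operatorname{S}^1(z)/z+\operatorname{S}^2(z)/(2(1-z))$, so that the claim reduces exactly to the defining equation \labelcref{eq-DefinitionC1}.

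The substantive case is $p=0$, where every term in \labelcref{eq-lemmarecursionC} contributes. After passing to the $q$-generating function in an auxiliary variable $\zeta$, as in the proofs of \cref{lemmarecursionBeta,lemmarecursionA,lemmarecursionB}, the three $\bdeltab_{p,0}$ sums with explicit $(-1)^n$ numerators assemble via \cref{formula-S4,formula-Sigma3,formula-Sigma4,formula-Upsilon2,formula-T1,formula-T2} into a closed-form Bessel and hypergeometric expression, while $\sum_{k\geq 0}\operatorname{B}_{k,q}(z)/(k+1)$ produces terms in $\operatorname{G}$, $\alpha$ and $\operatorname{H}$ through \labelcref{eq-DefinitionB}. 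The genuinely new object is the shifted difference $\sum_{k=0}^{q-1}[b^n_{k,q-k-1}-c^n_{k,q-k-1}]/(k+1)$; I would handle it either by deriving an index-shifted analogue of \cref{lemmarecursionBeta} along the same $\zeta$-generating function lines, or equivalently by exploiting the compact identity $\operatorname{B}_{0,q}(z)-\operatorname{C}_{0,q}(z)=(-z)^{q+2}/\dfac{q}-d_q z^{q+2}$, which is a direct consequence of \labelcref{eq-DefinitionC0}, together with the recursive formulas \labelcref{eq-DefinitionBp,eq-DefinitionCp} for higher $p$. The final identity should then reduce, via \cref{lemma-relation1,lemma-relation2}, to the definition \labelcref{eq-DefinitionC0} of $\operatorname{C}_{0,q}(z)$. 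The main obstacle is not conceptual but one of bookkeeping: carefully tracking the Kronecker and Heaviside indicators, shifting summation indices so that the auxiliary sums of \cref{section-summation} apply verbatim, and grouping the resulting Bessel and hypergeometric expressions so that the cancellations implicit in \cref{lemma-relation1,lemma-relation2} become visible.
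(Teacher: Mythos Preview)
Your proposal is correct and follows essentially the same route as the paper: pass to $n$-generating functions, split into the cases $p\geq 2$, $p=1$ and $p=0$, and for the substantive case $p=0$ pass further to a $q$-generating function in an auxiliary variable, evaluating the resulting sums via \cref{formula-S4,formula-Sigma3,formula-Sigma4,formula-Upsilon2,formula-T1} and the difference identity coming from \labelcref{eq-DefinitionC0}. The only minor discrepancy is that the paper invokes \cref{lemma-relation1} alone (not \cref{lemma-relation2}) in reducing to the final algebraic identity, and it also uses \cref{formula-U2} rather than \cref{formula-T2} in assembling the $p=0$ expression; neither point affects the validity of your plan.
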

\begin{proof}
The proof goes along the same lines as the previous ones. Equating the generating functions of both sides and using \cref{lemma-relation1} we need to show that
\begin{align}
 \label{eq-recursionCGF}
\frac{\operatorname{C}_{p,q}(z)}{z}=&\bdeltab_{p,1}\left[\frac{\operatorname{S}^1(z)}{z}+\frac{\operatorname{S}^2(z)}{2(1-z)}\right]\operatorname{A}_{1,q}(z)-\Theta_{p,1}\frac{\operatorname{C}_{p-1,q}(z)}{p(p+1)}\notag\\
		     &+\bdeltab_{p,0}\left[\frac{(-z)^q}{\dfac{q}}\left(\frac{\operatorname{S}^2(z)}{z(1-z)}(\operatorname{G}(z)-\alpha(z))+\operatorname{H}(z)+\operatorname{S}^4(z)+\operatorname{U}^2(z)+\frac{z}{q+1}\right)\right.\notag\\
		     &+\left.\frac{5z}{4}\sum _{k=1}^{q-1} \frac{2 (-z)^q}{\dfac{q-k-1} (k+1)!^2}-\sum_{k=0}^{q-1}{\frac{(-z)^{q+1}}{\dfac{k}\dfac{q-k-1}(k+1)^2}}\right.\notag\\
		     &\left.-\sum_{k=1}^{q-1}\frac{(-z)^{q+1}}{\dfac{q-k-1}(k+1)!^2}\sum_{l=2}^k{\frac{2l+1}{l(l+1)}}-\frac{z^{q+1}}{2}\sum_{k=1}^{q-1}{\frac{2(-1)^{k-1}}{(k+1)!^2}d_{q-k-1}}\right.\notag\\
		     &\left.+\Theta_{q,1}\left(z^{q+1}d_{q-1}-\frac{(-z)^{q+1}}{\dfac{q-1}}\right)\right].
\end{align}
For $p\geq 1$ \cref{eq-recursionCGF} is immediately clear from the defining equations \labelcref{eq-DefinitionC1,eq-DefinitionCp}. For $p=0$ we show that the $q$-generating functions coincide. Doing this we obtain after some algebra that \cref{eq-lemmarecursionC} is equivalent to
\begin{align}
\label{eq-recursionCqGF}
0=&\left[\frac{z-1}{z}\operatorname{H}(z)+\operatorname{S}^4(z)+z+\operatorname{U}^2(z)-\zeta z^2\right]\operatorname{T}^1(\zeta z)+z\left[1-\frac{\operatorname{S}^2(z)}{2}\right]\notag\\
  &+\left[\zeta z^2\left(1-\frac{\operatorname{S}^2(\zeta z)}{2}\right)-z\right]D(\zeta z)+\frac{5z}{4}\operatorname{\Sigma}^3(\zeta z)+z\left[\operatorname{\Sigma}^4(\zeta z)+\operatorname{\Upsilon}^2(\zeta z)\right]
\end{align}
with the functions
\begin{align*}
\operatorname{\Sigma}^3(\zeta z) =&\sum_{q=0}^\infty{\sum _{k=1}^{q-1} \frac{2 (-\zeta z)^q}{\dfac{q-k-1} (k+1)!^2}} ,\\
\operatorname{\Sigma}^4(\zeta z)	  =&\sum _{q=0}^{\infty } \sum _{k=0}^{q-1} \frac{(-\zeta z)^q}{\dfac{k} \dfac{q-k-1} (k+1)^2} \qquad\qquad& \text{ and }\\
\operatorname{\Upsilon}^2(\zeta z)	  =&\sum _{q=0}^{\infty} \sum _{k=1}^{q-1} \frac{(-\zeta z)^q }{\dfac{q-k-1} (k+1)!^2}\sum _{l=2}^k \frac{2 l+1}{l (l+1)} 
\end{align*}
given in \cref{formula-Sigma3,formula-Sigma4,formula-Upsilon2}. Since all functions occurring in \cref{eq-recursionCqGF} are explicitly known the result follows from basic algebra.
\end{proof}
With this work done we can now prove our main theorem.
\begin{proof}[Proof of \cref{theorem1}]
The Chapman-Kolmogorov equation implies the recursion
\begin{equation}
\label{eq-ChapmannKolmogorov}
K^n({r'},{r})=\int_{\R }{K^{}({r'},{s})K^{n-1}({s},{r})\mathbbm{d}{s}},\quad {r},{r'}\in\R ,\quad n>1,
\end{equation}
where $K^{}$ is the one-step transition kernel of $\Deltab$ given in \cref{eq-onestepkernel}
From this we can first prove the asserted symmetry $K^n({r'},{r})=K^n(-{r'},-{r})$ by induction on $n$. For $n=1$ this is clearly true so assuming that it holds for some $n\geq 0$ we conclude that $K^{n+1}({r'},{r})$ is equal to
\begin{align*}
\int_{\R }{K^{}({r'},{s})K^n({s},{r})\mathbbm{d}{s}}=&\int_{\R }{K^{}(-{r'},-{s})K^n(-{s},-{r})\mathbbm{d}{s}}\\
  =& \int_{\R }{K^{}(-{r'},{s})K^n({s},-{r})\mathbbm{d}{s}}=K^{n+1}(-{r'},-{r}).
\end{align*}
In the next step we prove \cref{eq-kernelMtoDelta}, also by induction on $n$. For $n=1$ the claim is true by \cref{initialconditions}. We now assume that \cref{eq-kernelMtoDelta} holds for some $n\geq 1$. It then follows that for ${r}\geq0$:
\begin{align*}
K^{n+1}({r'},{r}) =& \int_{\R }{K^{}({r'},{s})K^n({s},{r})\mathbbm{d}{s}}\\
			=& \int_{-\infty}^0{K^{}({r'},{s})K^n({s},{r})\mathbbm{d}{s}}+\int_0^{r}{K^{}({r'},{s})K^n({s},{r})\mathbbm{d}{s}}+\int_{r}^\infty{K^{}({r'},{s})K^n({s},{r})\mathbbm{d}{s}}\\
			=&\sum_{p,q=0}^na_{p,q}^ne^{-(q+2){r}}\int_{-\infty}^0{K^{}({r'},{s})e^{p{s}}\mathbbm{d}{s}}+\frac{(-1)^{n-1}e^{-(n+1){r}}}{\dfac{n-1}}\int_0^{r}{K^{}({r'},{s})e^{s}\mathbbm{d}{s}}\\
			 &+\sum_{p=0}^{n-1}{\frac{(-1)^ne^{-(n-p){r}}}{\dfac{p}\dfac{n-p-2}}}\int_0^{r}{K^{}({r'},{s}){s} e^{-p{s}} \mathbbm{d}{s}}\\
			 &+\sum_{p,q=0}^n {b_{p,q}^n e^{-(q+2) {r}}\int_0^{r}{K^{}({r'},{s}) e^{-p{s}}}            \mathbbm{d}{s}}+\frac{(-1)^{n-1}e^{-{r}}}{\dfac{n-1}}\int_{r}^\infty{K^{}({r'},{s})e^{-(n-1){s}}\mathbbm{d}{s}}\\
			 &+\sum_{p=0}^{n-1}{\frac{(-1)^ne^{-(n-p){r}}{r}}{\dfac{p}\dfac{n-p-2}}}\int_{r}^\infty{K^{}({r'},{s}) e^{-p{s}} \mathbbm{d}{s}}\\
			 &+\sum_{p,q=0}^n {c_{p,q}^n e^{-(q+2) {r}}\int_{r}^\infty							{K^{}({r'},{s}) e^{-p{s}}}            \mathbbm{d}{s}}.
\end{align*}
The five types of integrals occurring in this expression are easily evaluated to give for $p\geq 0$:
\begin{align*}
\int_{-\infty}^0{K^{}({r'},{s})e^{p{s}}\mathbbm{d}{s}}=&\begin{cases}
                                                                    \frac{1}{p+1}-\frac{e^{(p+1){r'}}}{(p+1)(p+2)}	& {r'}\leq 0\\
								    \frac{e^{-{r'}}}{p+2}				&{r'}>0
                                                                   \end{cases},\\
\int_0^{r}{K^{}({r'},{s})e^{{s}}\mathbbm{d}{s}}=&\begin{cases}
                                                                e^{{r'}}-e^{{r'}-{r}}	& {r'}\leq 0\\
								1-e^{{r'}-{r}}+{r'}	& 0<{r'}\leq{r}\\
								{r} 				& {r'}>{r}
                                                               \end{cases},\\
\int_0^{r}{K^{}({r'},{s}){s} e^{-p{s}}\mathbbm{d}{s}}=&\begin{cases}
                                                                         -\frac{e^{{r'}-(p+2){r}}{r}}{p+2}+\frac{e^{{r'}}-e^{{r'}-(p+2){r}}}{(p+2)^2}	& {r'}\leq 0\\
									 \frac{1}{(p+1)^2}-\frac{e^{-(p+1){r'}}(2p+3)}{(p+1)^2(p+2)^2}-\frac{e^{-(p+1){r'}}{r'}}{(p+1)(p+2)}-\frac{e^{{r'}-(p+2){r}}}{(p+2)^2}-\frac{e^{{r'}-(p+2){r}}{r}}{p+2}   & 0<{r'}\leq{r}\\
									 -\frac{e^{-(p+1){r}}{r}}{p+1}+\frac{1-e^{-(p+1){r}}}{(p+1)^2}   & {r'}>{r}
                                                                        \end{cases},\\
\int_0^{r}{K^{}({r'},{s})e^{-p{s}}\mathbbm{d}{s}}=&\begin{cases}
                                                                  \frac{e^{{r'}}-e^{{r'}-(p+2){r}}}{p+2}& {r'}\leq 0\\
								 \frac{1}{p+1}-\frac{e^{{r'}-(p+2){r}}}{p+2}-\frac{e^{-(p+1){r'}}}{(p+1)(p+2)} & 0< {r'}\leq{r}\\
								  \frac{1-e^{-(p+1){r}}}{p+1} & {r'}>{r}
                                                                 \end{cases},\\
\int_{r}^\infty{K^{}({r'},{s})e^{-p{s}}\mathbbm{d}{s}}=&\begin{cases}
                                                                       \frac{e^{{r'}-(p+2){r}}}{p+2}	& {r'}\leq{r}\\
									\frac{e^{-(p+1){r}}}{p+1}-\frac{e^{-(p+1){r'}}}{(p+1)(p+2)}	&{r'}>{r}
                                                                      \end{cases}.
\end{align*}
This implies that for ${r'}\leq 0$ the function $K^{n+1}$ is given by
\begin{equation}
\label{eq-KernelRecursionA}
K^{n+1}({r'},{r})=\sum_{p,q=0}^n {\tilde a_{p,q}^{n+1}e^{p{r'}-(q+2) {r}}},
\end{equation}
where
\begin{align}
\label{eq-tildea-rec}
\tilde a_{p,q}^{n+1}=&\bdeltab_{p,0}\sum_{k=0}^n{\frac{a_{k,q}^n}{k+1}}-\Theta_{p,1}\frac{a_{p-1,q}^n}{p(p+1)}+\bdeltab_{p,1}\left[\bdeltab_{q,n-1}\frac{(-1)^{n-1}}{\dfac{n-1}}-\bdeltab_{q,n}\frac{(-1)^{n-1}}{\dfac{n-1}}\right.\notag\\
	      &\left.+\tilde\Theta_{q,n-2}\frac{(-1)^n}{\dfac{n-q-2}\dfac{q}(n-q)^2}\right.\notag\\
	      &\left.-\bdeltab_{q,n}\sum_{k=0}^{n-2}{\frac{(-1)^n}{\dfac{k}\dfac{n-k-2}(p+2)^2}}\right.\notag\\
	      &+\left.\sum_{k=0}^n{\frac{b_{k,q}^n}{k+2}}-\sum_{k=0}^{q-2}{\frac{b_{k,q-k-2}^n}{k+2}}+\sum_{k=0}^{q-2}{\frac{c_{k,q-k-2}^n}{k+2}}+\bdeltab_{q,n}\frac{(-1)^{n-1}}{\dfac{n-1}(n+1)}\right],
\end{align}
By \cref{lemmarecursionA}, $\tilde a^{n+1}_{p,q}$ is equal to $a_{p,q}^{n+1}$. Similarly, for $0<{r'}\leq{r}$ the function $K^{n+1}$ takes the form
\begin{equation}
\label{eq-KernelRecursionB}
K^{n+1}({r'},{r}) = \sum_{q=0}^n{\beta^{n+1}_q e^{{r'}-(q+2){r}}}+\sum_{p=0}^{n-1}{\frac{(-1)^{n+1}{r'}e^{-p({r'}-{r})-(n+1){r}}}{\dfac{p}\dfac{n-p-1}}}+\sum_{p=0}^{n+1}\sum_{q=0}^{n+1} {\tilde b_{p,q}^{n+1} e^{-p{r'}-(q+2) {r}}}, 
\end{equation}
where
\begin{align*}
\beta^{n+1}_q =	& \bdeltab_{q,n}\left[\frac{(-1)^{n-1}}{\dfac{n-1}(n+1)}-\frac{(-1)^{n-1}}{\dfac{n-1}}-\sum_{k=0}^{n-2}{\frac{(-1)^n}{\dfac{k}\dfac{n-k-2}(k+2)^2}}\right]\\
		&-\sum_{k=0}^{q-2}{\frac{b^n_{k,q-k-2}}{k+2}}+\sum_{k=0}^{q-2}{\frac{c^n_{k,q-k-2}}{k+2}}
\end{align*}
and
\begin{align} 
\label{eq-tildeb-rec}
\tilde b_{p,q}^{n+1} =&\bdeltab_{p,1}\sum_{k=0}^n{\frac{a_{k,q}^n}{k+2}}-\Theta_{p,1}\bdeltab_{q,n-p-1}\frac{(-1)^n(2p+1)}{\dfac{p-1}\dfac{n-p-1}p^2(p+1)^2}\notag\\
	      &+\bdeltab_{p,0}\left[\bdeltab_{q,n-1}\frac{(-1)^{n-1}}{\dfac{n-1}}+\tilde\Theta_{q,n-2}\frac{(-1)^n}{\dfac{n-q-2}\dfac{q}(n-q-1)^2}+\sum_{k=0}^n{\frac{b_{k,q}^n}{k+1}}\right].
\end{align}
\cref{lemmarecursionBeta} implies that $\beta^{n+1}_q=\bdeltab_{q,n}\frac{(-1)^n}{\dfac{n}}$ and by \cref{lemmarecursionB}, $\tilde b_{p,q}^{n+1}$ is equal to $b_{p,q}^{n+1}$. Finally, for ${r'}>{r}$ the function $K^{n+1}$ becomes
\begin{equation}
\label{eq-KernelRecursionC}
K^{n+1}({r'},{r})=\frac{(-1)^ne^{-n{r'}-{r}}}{\dfac{n}}+\sum_{p=0}^{n-1}{\frac{(-1)^{n+1}{r} e^{-p({r'}-{r})-(n+1){r}}}{\dfac{p}\dfac{n-p-1}}}+\sum_{p=0}^{n+1}\sum_{q=0}^{n+1} {\tilde c_{p,q}^{n+1} e^{-p{r'}-(q+2) {r}}},
\end{equation}
where
\begin{align}
\label{eq-tildec-rec}
\tilde c_{p,q}^{n+1} =& \bdeltab_{p,1}\sum_{k=0}^n{\frac{a_{k,q}^n}{k+2}}-\Theta_{p,1}\frac{c_{p-1,q}^n}{p(p+1)}+\bdeltab_{p,0}\left[\tilde\Theta_{q,n-2}\frac{(-1)^n}{\dfac{n-q-2}\dfac{q}(n-q-1)^2}\right.\notag\\
	      &\left.+\bdeltab_{q,n-1}\sum_{k=0}^{n-2}{\frac{(-1)^n}{\dfac{k}\dfac{n-k-2}(k+1)^2}}+\sum_{k=0}^n{\frac{b_{k,q}^n}{k+1}}-\sum_{k=0}^{q-1}{\frac{b_{k,q-k-1}^n}{k+1}}\right.\notag\\
	      &\left.-\bdeltab_{q.n-1}\frac{(-1)^{n-1}}{n!^2}+\sum_{k=0}^{q-1}{\frac{c_{k,q-k-1}^n}{k+1}}\right].
\end{align}
In \cref{lemmarecursionC} it was shown that $\tilde c_{p,q}^{n+1}$ equals $c_{p,q}^{n+1}$. Combining \cref{eq-KernelRecursionA,eq-KernelRecursionB,eq-KernelRecursionC} proves the theorem because it follows that for ${r}\geq0$ the values $K^{n+1}({r'},{r})$ are given by
\begin{equation*}
\begin{cases}
                \sum_{p,q=0}^n {a_{p,q}^{n+1}e^{p{r'}-(q+2) {r}}} 			& {r'}\leq0 	\\
		\frac{(-1)^ne^{{r'}-(n+2){r}}}{\dfac{n}}+\sum_{p=0}^{n-1}{\frac{(-1)^{n+1}{r'}e^{-p({r'}-{r})-(n+1){r}}}{\dfac{p}\dfac{n-p-1}}}+\sum_{p=0}^{n+1}\sum_{q=0}^{n+1} {b_{p,q}^{n+1} e^{-p{r'}-(q+2) {r}}} 	& 0<{r'}\leq{r} 	\\
		\frac{(-1)^ne^{-n{r'}-{r}}}{\dfac{n}}+\sum_{p=0}^{n-1}{\frac{(-1)^{n+1}{r} e^{-p({r'}-{r})-(n+1){r}}}{\dfac{p}\dfac{n-p-1}}}+\sum_{p=0}^{n+1}\sum_{q=0}^{n+1} {c_{p,q}^{n+1} e^{-p{r'}-(q+2) {r}}}		& {r'}>{r}
                \end{cases}.
\end{equation*}
\end{proof}

\section{Discussion}
The way in which \cref{theorem1} is proved gives little insight into how one arrives at the expressions \labelcref{eq-DefinitionA,eq-DefinitionB,eq-DefinitionC} for the generating functions $\operatorname{A}_{p,q}$, $\operatorname{B}_{p,q}$ and $\operatorname{C}_{p,q}$ in the first place. It appears to be indicated to briefly comment on how we derived these formulas. The first step was to compute the kernels $K^n$ for low values of $n$ from the Chapman-Kolmogorov equation \labelcref{eq-ChapmannKolmogorov} and to observe that they have the form asserted in \cref{theorem1}. In the next step we guessed the expression for the part of $K^n(r',r)$ not involving the coefficients $a^n_{p,q}$, $b^n_{p,q}$ and $c^n_{p,q}$ so that the problem was reduced to solving the recurrence equations \labelcref{eq-tildea-rec,eq-tildeb-rec,eq-tildec-rec}. Assuming the validity of \cref{lemmarecursionBeta} it turns out that the first two of these recurrence equation can relatively easily be solved first for $\operatorname{G}(z)$, which is --- up to the factor $(-z)^q/\dfac{q}$ --- the generating function of $(a^n_{1,q}+b^n_{1,q})_{n\geq 1}$, and then also for $\operatorname{A}_{p,q}$ and then $\operatorname{B}_{p,q}$. The third recursion for $(c^n_{p,q})$ was simplified by the empirical observation that 
\begin{equation*}
\sum_{k=0}^{q-1}\frac{b^n_{k,q-k-1}}{k+1}-\sum_{k=0}^{q-1}\frac{c^n_{k,q-k-1}}{k+1}=\bdeltab_{q,n+1}\left[\frac{(-1)^{n-1}}{n!^2}+\sum_{k=0}^{q-1}{\frac{(-1)^q}{\dfac{k}\dfac{q-k-1}(k+1)^2}}-d_q\right]
\end{equation*}
for some real numbers $d_q$, $q\geq 0$, and then solved for $\operatorname{C}_{p,q}$. The educated guesses made in the course of this derivation are justified {\it ex posteriori} by the proofs presented in this paper.

Our original motivation was to derive an explicit expression for the asymptotic variance as in \cref{eq-formulasigma}. For this purpose, knowledge of the generating function of the coefficients of the $n$-step transition kernel, as opposed to knowledge of the coefficients themselves, is sufficient. In order to evaluate the infinite sum appearing in \cref{eq-formulasigma} one is primarily interested in sums of the form $\sum_{n\geq 1}{a^n_{p,q}}$ which is equal to	 $\operatorname{A}_{p,q}(1)$, provided this number is finite. Carrying out the computations, however, turns out to be quite subtle and the results will be reported elsewhere.

It is a natural question if the results presented in this paper can be extended to the first\hyp{}passage percolation problem on $\NN\times\{0,1,\ldots,k\}$, $k\geq 2$. Conceptually, our approach carries over to this setting only if one considers semi-directed percolation in which the horizontal edges may be traversed in only one direction; the combinatorics involved in computing the one-step transition kernel of the Markov chain $\Deltab$ as well as the explicit iteration of the Chapman-Kolmogorov equation \labelcref{eq-ChapmannKolmogorov}, however, soon become unmanageable for larger values of $k$. For the undirected first\hyp{}passage percolation problem there is the possibility that the shortest path $\{(0,0)=p_0,p_1,\ldots,p_{N-1},p_N=(n,0)\}$, $p_i=(x_i,y_i)$, between $(0,0)$ and $(n,0)$ {\it backtraces} by which we mean that there exist indices $0\leq i<j\leq N$ such that $x_j<x_i$. The possible occurrence of such configurations prevents an extension of our recursive method to broader graphs in the undirected setting. One might also wonder if similar results can be obtained for more general class of edge-weight distributions $\Pb$. It is easy to see that the Markov property of $\Deltab$ does not depend on the choice of $\Pb$ and an analysis of our proofs shows that the validity of the central limit theorem as well as expression \labelcref{eq-formulasigma} for the asymptotic variance is not affected by choosing a different edge-weight distribution either, provided one can prove that the stationary distribution $\tilde\pi$ and the one-step kernel $K$ satisfy the moment and mixing conditions used in the proof of \cref{theoremCLT}. It is however, very difficult, to evaluate the formula for the $n$-step transition kernel explicitly, if $\Pb$ is not the exponential distribution, although our approach via generating functions remains likewise applicable.

\paragraph{Acknowledgements}
The work on this paper was completed during a stay at the statistics department of Colorado State University whose hospitality the author gratefully acknowledges. The author also gratefully acknowledges financial support from Technische Universit\"at M\"unchen - Institute for Advanced Study funded by the German Excellence Initiative and from the International Graduate School of Science and Engineering.

\end{document}